\numberwithin{equation}{section} 
\newtheorem{theorem}{Theorem}[section]
\newtheorem{remark}[theorem]{Remark}
\newtheorem{lemma}[theorem]{Lemma}
\newtheorem{definition}[theorem]{Definition}
\newcommand\1{{\ensuremath {\mathds 1} }} 
\def\C{{\mathbb C}}
\def\N{{\mathbb N}}
\def\R{{\mathbb R}}
\def\Z{{\mathbb Z}}
\def\bk{{\bold k}}
\def\bl{{\bold \ell}}
\def\bR{{\bold R}}
\def\bX{{\bold X}}
\def\bx{{\bold x}}
\def\by{{\bold y}}
\def\bnull{{\bold 0}}
\def\rd{{\mathrm{d}}}
\def\re{{\mathrm{e}}}
\def\ri{{\mathrm{i}}}
\def\cA{{\mathcal A}}
\def\cB{{\mathcal B}}
\def\cE{{\mathcal E}}
\def\cH{{\mathcal H}}
\def\cK{{\mathcal K}}
\def\cP{{\mathcal P}}
\def\cS{{\mathcal S}}
\newcommand{\bra}{\langle}
\newcommand{\ket}{\rangle}
\newcommand{\p}{\partial}
\newcommand{\WS}{\Gamma} 
\newcommand{\RLat}{\mathcal{R}^*} 
\newcommand{\Lat}{\mathcal{R}} 
\newcommand{\loc}{\mathrm{loc}}
\newcommand{\rin}{\mathrm{in}}
\newcommand{\Ker}{\mathrm{Ker}}
\newcommand{\Span}{\mathrm{Span}}
\def\sqw{\hbox{\rlap{\leavevmode\raise.3ex\hbox{$\sqcap$}}$%
\sqcup$}}
\def\cqfd{\ifmmode\sqw\else{\ifhmode\unskip\fi\nobreak\hfil
\penalty50\hskip1em\null\nobreak\hfil\sqw
\parfillskip=0pt\finalhyphendemerits=0\endgraf}\fi}
\renewcommand{\eqref}[1]{(\ref{#1})}
\begin{document}

\title{A mathematical and numerical framework for bubble meta-screens\thanks{\footnotesize Hyundae Lee was supported by NRF-2015R1D1A1A01059357 grant.  Hai Zhang was supported by the initiation grant IGN15SC05 from HKUST.}}
\date{}

\author{
Habib Ammari\thanks{\footnotesize Department of Mathematics, 
ETH Z\"urich, 
R\"amistrasse 101, CH-8092 Z\"urich, Switzerland (habib.ammari@math.ethz.ch, brian.fitzpatrick@sam.math.ethz.ch, david.gontier@sam.math.ethz.ch ).} \and Brian Fitzpatrick\footnotemark[2] \and David Gontier\footnotemark[2] 
\and Hyundae Lee\thanks{\footnotesize  Department of Mathematics, Inha University,  253 Yonghyun-dong Nam-gu,  Incheon 402-751,  Korea (hdlee@inha.ac.kr).}  \and Hai Zhang\thanks{\footnotesize 
Department of Mathematics, 
 HKUST,  Clear Water Bay, Kowloon, Hong Kong (haizhang@ust.hk).}}

\maketitle

\begin{abstract}
The aim of this paper is to provide a mathematical and numerical framework for the analysis and design of bubble meta-screens. An acoustic meta-screen is a thin sheet with patterned subwavelength structures, which nevertheless has a macroscopic effect on the acoustic wave propagation. In this paper, periodic subwavelength bubbles mounted on a reflective surface (with Dirichlet boundary condition) is considered. It is shown that the structure behaves as an equivalent surface with Neumann boundary condition at the Minnaert resonant frequency which corresponds to a wavelength much greater than the size of the bubbles. Analytical formula for this resonance is derived. Numerical simulations confirm its accuracy and show how it depends on the ratio between the periodicity of the lattice, the size of the bubble, and the distance from the reflective surface. The results of this paper formally explain the super-absorption behavior observed in~[V.~Leroy et al., Phys. Rev. B, 2015].

\end{abstract}

\bigskip

\noindent {\footnotesize Mathematics Subject Classification
(MSC2000): 35R30, 35C20.}

\noindent {\footnotesize Keywords: Minnaert resonance, array of bubbles, periodic Green's function, metasurfaces.}



\section{Introduction}

In this article we study the reflection properties of a {\em meta-screen} from a mathematical point of view. Broadly speaking, a meta-screen is a thin sheet with  patterned subwavelength structures which has a macroscopic effect on the reflection and transmission of waves. 

\medskip

One way to design meta-screens is to put microscopic gas inclusions along a periodic lattice. The properties of such screens have been studied in recent years, with spectacular results. In~\cite{Leroy2009, Leroy2009_a}, it was experimentally shown how the reflection and transmission coefficients vary with respect to the wavelength of the incoming acoustic wave. It was later shown how super-absorption may be achieved with these meta-screens~\cite{Leroy2015}, \textit{i.e.} null reflection and null transmission coefficients.

\medskip

These phenomena can be explained by the use of subwavelength resonators in the design of the meta-screens. In~\cite{prsa} for instance, a mathematical justification was given when the meta-screen was made of subwavelength plasmonic particles, where resonance is due to negative dielectric coefficient~\cite{Ammari2015_c}. Minnaert bubbles act like plasmonic nanoparticles. We refer the reader to \cite{Ammari2015_c,Ammari2016, kang1, hyeonbae, Gri12} for the mathematical analysis of resonances for plasmonic nanoparticles. 

\medskip

In this paper, we study the case where the resonance is due to the high contrast in density between the gas inclusions and the surrounding medium, characterized by a small parameter $\delta$ representing the inverse of the contrast. This resonance, known as the Minnaert resonance, was observed and explained as early as 1933~\cite{Minnaert1933} (see also~\cite{Leroy2002, Devaud2008}). We recently gave a rigorous mathematical justification of this resonance in the case of a single bubble in a homogeneous medium~\cite{Ammari2016_Minnaert}. We studied the acoustic property of the bubble, and proved that Minnaert resonance occurred when the frequency is 
appropriately propositional to $\frac{\sqrt{\delta}}{s}$ with $s$ being the size of the bubble.

\medskip

In the present paper, we use similar techniques as in~\cite{prsa} to study the reflection properties of a meta-screen when both the typical size of the periodic cell and the size of the bubbles are subwavelength, and the contrast $\delta^{-1}$ is large. We also investigate the limiting case when $a$ goes to $0$ and $\delta$ goes to $0$ proportional to $a^2$. Note that in the case where $\delta$ is a fixed parameter, the usual homogenization techniques can be applied, and a large literature already exists on describing boundary layer effects~\cite{Abboud1996, Achdou1998, Allaire1999}. In our case, because of the excitation of resonace, one must resort to other methods. The idea of using an unbounded parameter as the size of the cells goes to zero is not new, and is sometimes referred to as <<high-contrast homogenization>>~\cite{Briane2012, Camar2013, Felbacq1997}. This technique has already been successful in explaining some spectacular physical phenomena~\cite{Bouchitte2004}.

\medskip

Our main result is the following. We consider a periodic subwavelength bubbles above a reflective surface ( with Dirichlet boundary condition). Then, under the appropriate scaling $\delta = \mu a^2$, Minnaert resonance can be excited at a fixed frequency of order one, and 
the surface behaves as an equivalent surface with Neumann boundary condition at this frequency at the limit $a \to 0$. The theorem is valid for all shapes of Minnaert resonators. When taking into account some extra physical damping effects that do not appear in our mathematical model~\cite{Khismatullin2004}, this eventually explains the super-absorption behavior witnessed in~\cite{Leroy2015} (see Remark \ref{remark23}).

\medskip

The paper is structured as follows. In Section~\ref{sec:statementPb}, we fix the notations of the experiment under consideration, and state our main result, the proof of which is detailed in Section~\ref{sec:proof}. The proof uses layer potential techniques and asymptotic expansions. Finally, numerical results are presented in Section~\ref{sec:numericalIllustrations}.

\section{Statement of the problem}
\label{sec:statementPb}

We consider a reflective surface, on top of which small scatterers are arranged along some periodic lattice. A typical example of such a situation is given by air bubbles arranged in water, as described in~\cite{Leroy2015, Leroy2009_a}. 

\medskip

Let us fix some notation. We will state our results in dimension $d \in \{ 2, 3 \}$, and write $\R^d \ni \bx = (\bar{x}, x_d)$, with $\bar{x} \in \R^{d-1}$ and $x_d \in \R$. We let $\partial \R^d_+ := \{ \bx \in \R^d, \ x_d = 0\}$ represent the reflective plane, and $\R^d_{\pm} := \{ \bx \in \R^d, \ \pm x_d > 0)$ be the upper and lower half space. The shape of the bubbles is described by a simply connected domain $D \subset \R^d_+$ with smooth boundary $\p D$. The bubbles are arranged periodically along a lattice $\Lat$ of $\R^{d-1}$. For instance, if $d = 2$, then $\Lat = a \Z$ for some $a > 0$. 

\medskip

For $\varepsilon > 0$, we denote by $\Omega^\varepsilon$ the volume occupied by the bubbles. More specifically, we set
\[
	\Omega^\varepsilon := \bigcup_{\bR \in \Lat} \varepsilon \left( D + \bR \right)
\]

\begin{figure}[h]
\centering
\begin{tikzpicture}
	\draw[->] (-1, 1) -> (0, 1); \node at (0, 1.25) {$\bar{x}$};
	\draw[->] (-1, 1) -> (-1, 2); \node at (-0.7, 1.7) {$x_d$};

	\draw (1,1) -- (9,1);
	\foreach \x in {1, 2, 3, 4, 5, 6, 7, 8} {
		\draw (\x, 0.5) -- (\x+0.5, 1);
		\draw (\x+0.5, 0.5) -- (\x+1, 1);
	}
	\node at (9.5,1) {$\p \R^d_+$};
	
	\foreach \x in {2, 3, 4, 5, 6, 7, 8} {
		\draw (\x, 2) circle (0.3);
	}
	
	\draw[<->] (3, 2.5) -- (4,2.5); \node at (3.5, 2.7) {$\varepsilon a$};
	\draw[<->] (1.3, 1) -- (1.3,2); \node at (0.8, 1.5) {$O(\varepsilon)$};
	
\end{tikzpicture}
\end{figure}
%
\medskip

We denote by $\rho_b$ and $\kappa_b$ the density and bulk modulus of the air inside the bubbles, and by $\rho$ and $\kappa$ the corresponding parameters for the background medium $\R^d \setminus \overline{\Omega^\varepsilon}$. We consider the scattering of acoustic waves by this meta-screen. In the sequel, $\omega/(2\pi)$ represents the frequency of the source,  and
\[
	v = \sqrt{\frac{\lambda}{\rho}}, \quad 
	v_b = \sqrt{\frac{\lambda_b}{\rho_b}}, \quad
	k= \frac{\omega}{v}, \quad \text{and} \quad
	k_b= \frac{\omega}{v_b}
\]
denote respectively the speed of sound outside and inside the bubbles, and the wave number outside and inside the bubbles. Finally, we introduce the dimensionless contrast parameter
\[
	\delta = \frac{\rho_b}{\rho}.
\]

We assume that $\delta \ll 1$. 
With appropriate physical units, we also assume that $k=O(1)$, $k_b=O(1)$, and
the size of $D$ is also of order one. 
With these in mind, the acoustic problem is (we use capital letters for macroscopic fields)
\begin{equation} \label{eq:scattering}
	\left\{ \begin{aligned}
		& \left( \Delta + k^2  \right) U^\varepsilon = 0 \quad \text{on} \quad \R^d_+ \setminus \overline{\Omega^\varepsilon}, \\
		& \left( \Delta + k_b^2  \right) U^\varepsilon = 0 \quad \text{on} \quad \Omega^\varepsilon ,\\
		& U^\varepsilon |_+ = U^\varepsilon |_- \quad \text{on} \quad \p \Omega^\varepsilon ,\\
		& \partial_\nu U^\varepsilon |_- = \delta \partial_\nu U^\varepsilon|_+  \quad \text{on} \quad \p \Omega^\varepsilon , \\
		& U^s := U^\varepsilon - U^\rin \quad \text{satisfies the outgoing radiation condition} , \\
		& U^\varepsilon = 0 \quad \text{on} \quad \p \R^d_+,
	\end{aligned}
	\right.
\end{equation}
where $U^\rin$ is some incoming pressure wave satisfying $(\Delta + k^2) U^\rin = 0$ and $\cdot|_\pm$ denotes the limits from respectively outside and inside of $\Omega^\varepsilon$.
In this paper, we consider the special case where the incoming pressure $U^\rin$ is a plane wave going towards the plane from the upper half-space, so we set
\[
	U_\bk^\rin(\bX) = u_0 \re^{- \ri \bk \cdot \bX} = u_0 \re^{- \ri \bar{k} \cdot \bar{X}} \re^{- \ri k_d X_d},
\]
where $\bk = (\bar{k}, k_d) \in \R^{d-1} \times \R$, with $k_d > 0$ and $| \bk | = k$, is the wave vector. 
%

\medskip

Such problems have been extensively studied using homogenization theory in recent decades. For instance, the scattered field is well-understood as $\varepsilon \to 0$, and is of order $\varepsilon$~\cite{Achdou1998, Allaire1999, Ammari2016_plasmonicMetasurfaces}. In the present article, we study the special case where the contrast $\delta$ is also scaled with $\varepsilon \to 0$. As we will see, such a regime, which is accessible physically, presents some interesting features.

\medskip

More specifically, according to~\cite{Ammari2016_Minnaert}, there is a resonance phenomenon in the regime $\sqrt{\delta} \sim \varepsilon$ (Minnaert resonance). In the sequel, we fix $\mu > 0$, and study~\eqref{eq:scattering}, with 
\[
	\delta := \delta_\varepsilon = \mu \varepsilon^2.
\]
In this case, standard homogenization techniques are no longer applicable and new techniques are needed.

\medskip

In the absence of bubbles, the solution of~\eqref{eq:scattering} is simply
\[
	U_0(\bX) := U_\bk^\rin(\bar{X}, X_d) - U_\bk^\rin(\bar{X}, -X_d) = - 2 \ri u_0 \re^{- \ri \bar{k} \cdot \bar{X}} \sin(k_d X_d).
\]
In the presence of bubbles, we expect this solution to be perturbed. Our goal is to describe the main contribution of this perturbation. In order to state our results, we introduce some extra-notation. First, we introduce a constant $\mu_M$ defined by
\begin{equation} \label{eq:def:muM}
	\mu_M := \dfrac{k_b^2 | D | }{C_{D, \Lat}^+},
\end{equation}
where $| D |$ is the volume of $D$, and $C_{D, \Lat}^+$ is the periodic capacity defined in Definition~\ref{def:periodicCapacity}. Then we introduce the scattering function
\begin{equation} \label{eq:def:gs}
	g_s(\mu, \varepsilon) := 
	 \dfrac{ \varepsilon  M_1}{1 - \frac{\mu_M}{\mu}  - \varepsilon \ri \frac{M_1^2 k_d C_{D, \Lat}^+ }{| \WS |}},
\end{equation}
where the constant $M_1$ is defined in~\eqref{eq:def:M1} below and $|\Gamma|$ is the volume of the fundamental cell of $\Lat$. We also introduce the functions $\alpha_0$ and $\alpha_1$: $\alpha_0$ is the (unique) solution to the problem
\begin{equation} \label{eq:def:alpha0}
	\left\{ \begin{aligned}
		& \Delta \alpha_0 = 0 \quad \text{on} \quad \R^d_+ \setminus \overline{\Omega^1}, \\
		& \alpha_0 |_+ = 1 \quad \text{on} \quad \p \Omega^1, \\
		& \alpha_0 - \alpha_{0, \infty} \quad \text{is exponentially decaying as $x_d \to \infty$}, \\
		& \alpha_0 = 0 \quad \text{on} \quad \p \R^d_+,
	\end{aligned}
	\right.
\end{equation}
and $\alpha_1$ is the (unique) solution to the problem
\begin{equation} \label{eq:def:alpha1}
	\left\{ \begin{aligned}
		& \Delta \alpha_1 = 0 \quad \text{on} \quad \R^d_+ \setminus \overline{\Omega^1}, \\
		& \alpha_1 |_+ = x_d |_+ \quad \text{on} \quad \p \Omega^1, \\
		& \alpha_1 - \alpha_{1, \infty} \quad \text{is exponentially decaying as $x_d \to \infty$}, \\
		& \alpha_1 = 0 \quad \text{on} \quad \p \R^d_+.
	\end{aligned}
	\right.
\end{equation}
The exact values of $\alpha_{0,\infty}$ and $\alpha_{1,\infty}$ are given in Lemma~\ref{lem:alpha01} below. The function $\alpha_0$ is related to the monopole moment of bubbles, while $\alpha_1$ is related to their dipole moment.

\medskip

Finally, we introduce a functional space. Let $L \in \R^+$ be large enough so that for all $\by \in D$, it holds that $y_d \le L$ (hence $0 \le y_d \le L$). For $a \in \R^+$, we denote by $S_a := \R^{d-1} \times (a, \infty)$, and we denote by $W^{1, \infty}(S_a)$ the usual Sobolev space with norm
\[
	\left\| f \right\|_{W^{1, \infty}(S_a)} := \sup_{\bX \in S_a} \left| f \right| (\bX)+   \sup_{\bX \in S_a} \left| \nabla f \right|( \bX).
\]

\medskip

Our main result is the following.
\begin{theorem} \label{th:resonant_metascreen}
There exists $C \in \R^+$ such that, for all $\varepsilon \ge 0$ and all $\mu > 0$, it holds that
	\[
		\left\| U^\varepsilon(\bX) - U_0(\bX) - \left( U_1^\varepsilon(\bX) + U_{\rm BL}^\varepsilon \left( \bX, \frac{\bX}{\varepsilon}\right) \right) \right\|_{W^{1, \infty}(S_{\varepsilon L})} \le C \varepsilon \left| g_s(\mu, \varepsilon) \right|,
	\]
	where 
	\begin{eqnarray*}
		U_{\rm BL}^\varepsilon(\bX, \bx) &=& (2 \ri u_0 k_d)  \re^{- \ri \bar{k} \cdot \bar{X}}\left[ \varepsilon \left( \alpha_1(\bx) - \alpha_{1, \infty} \right) - g_s(\mu, \varepsilon)  \left( \alpha_0(\bx) - \alpha_{0, \infty} \right) \right], \\
		U_1^\varepsilon(\bX) &=& (2 \ri u_0 k_d)  \re^{\ri (k_d X_d - \bar{k} \cdot \bar{X})} \left( \varepsilon \alpha_{1, \infty} - g_s(\mu, \varepsilon) \alpha_{0, \infty} \right),
	\end{eqnarray*}
	and $g_s(\mu, \varepsilon)$ is defined by (\ref{eq:def:gs}). Moreover, 
	$U_{\rm BL}^\varepsilon$ is exponentially decaying as $x_d \to \infty$.\end{theorem}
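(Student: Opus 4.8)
The plan is to solve the scattering problem \eqref{eq:scattering} in two steps: first exhibit the ``macroscopic'' ansatz $U_0 + U_1^\varepsilon$, which captures the bulk reflected wave plus the first-order dipole/monopole corrections, and then correct the boundary-layer mismatch near $\p\Omega^\varepsilon$ by the rescaled profile $U_{\rm BL}^\varepsilon(\bX,\bX/\varepsilon)$. The natural tool is the layer-potential representation adapted to the Dirichlet half-space: write $U^\varepsilon = U_0 + \cS^{k}_{\Omega^\varepsilon}[\psi]$ outside the bubbles and $U^\varepsilon = \cS^{k_b}_{\Omega^\varepsilon}[\phi]$ inside, using the \emph{half-space} (Dirichlet) fundamental solution so that the condition $U^\varepsilon=0$ on $\p\R^d_+$ is built in. The transmission conditions on $\p\Omega^\varepsilon$ then yield a system for $(\phi,\psi)$. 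Because the geometry is periodic along $\Lat$, one Floquet-reduces to the fundamental cell and works with the quasi-periodic Green's function at quasi-momentum $\bar k$; the coupling to the reflective plane is what produces the image terms and, ultimately, the factor $\sin(k_d X_d)$ structure and the function $\alpha_0,\alpha_1$ with their limiting values $\alpha_{0,\infty},\alpha_{1,\infty}$ from Lemma~\ref{lem:alpha01}.

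The core of the argument is the asymptotic analysis of this integral system as $\varepsilon\to 0$ with $\delta=\mu\varepsilon^2$. I would rescale the bubble to unit size, $\bx = \bX/\varepsilon$, expand the periodic Green's function into its singular (static, periodic) part plus a smooth remainder of order $\varepsilon$, and Taylor-expand the incident data $U_0$ around the plane: $U_0(\bX)\approx -2\ri u_0 k_d\,\re^{-\ri\bar k\cdot\bar X}\,X_d + O(X_d^3)$, so that on the bubble scale the effective Dirichlet data is a constant plus a linear-in-$x_d$ term — exactly what $\alpha_0$ (monopole) and $\alpha_1$ (dipole) are built to resolve via \eqref{eq:def:alpha0}--\eqref{eq:def:alpha1}. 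Imposing the transmission condition $\p_\nu U^\varepsilon|_- = \delta\,\p_\nu U^\varepsilon|_+$ and integrating over $\p D$ produces the solvability condition whose denominator is $1-\mu_M/\mu - \varepsilon\ri\,M_1^2 k_d C^+_{D,\Lat}/|\WS|$: the term $\mu_M/\mu$ comes from balancing $k_b^2|D|$ against the periodic capacity $C^+_{D,\Lat}$ (this is where \eqref{eq:def:muM} enters), and the $\varepsilon\ri(\cdots)$ term is the leading radiative correction from the smooth part of the Green's function, giving the resonance width. This fixes the scattering amplitude $g_s(\mu,\varepsilon)$ of \eqref{eq:def:gs}, and back-substitution identifies $U_1^\varepsilon$ (the propagating reflected correction, carried by $\alpha_{0,\infty},\alpha_{1,\infty}$ and the plane wave $\re^{\ri(k_d X_d-\bar k\cdot\bar X)}$) and $U_{\rm BL}^\varepsilon$ (the evanescent near-field correction, carried by $\alpha_0-\alpha_{0,\infty}$, $\alpha_1-\alpha_{1,\infty}$, which decay exponentially in $x_d$ by \eqref{eq:def:alpha0}--\eqref{eq:def:alpha1}). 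The exponential decay of $U_{\rm BL}^\varepsilon$ in $x_d$ then follows directly from the corresponding property of $\alpha_0,\alpha_1$.

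Finally, to upgrade these formal expansions to the rigorous $W^{1,\infty}(S_{\varepsilon L})$ estimate, I would set up a fixed-point / Neumann-series argument: the full density $(\phi,\psi)$ is written as the leading term plus a remainder, the remainder solves a perturbed integral equation whose operator is a small ($O(\varepsilon)$, away from resonance) perturbation of an invertible one — invertibility being uniform in $\varepsilon$ thanks to the explicit control of the resonant denominator, which is bounded below by a constant times $|g_s(\mu,\varepsilon)|^{-1}$. Propagating the $L^\infty$-type bounds on the densities through the layer potentials (using that the half-space Green's function and its gradient are bounded on $S_{\varepsilon L}$, uniformly away from the bubbles) yields the stated error $C\varepsilon|g_s(\mu,\varepsilon)|$. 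The main obstacle is precisely this uniformity: near the Minnaert resonance $\mu\approx\mu_M$ the denominator of $g_s$ is small, the layer-potential operator is nearly singular, and one must show that the only near-kernel direction is the monopole mode already extracted into $g_s$, so that on the complementary subspace the operator stays boundedly invertible with constants independent of $\varepsilon$ and of how close $\mu$ is to $\mu_M$. Handling the quasi-periodic Green's function's own singularities (at $|\bar k| = k$, i.e. Wood anomalies) and making the remainder estimates genuinely uniform there is the delicate point; everything else is a careful but routine bookkeeping of asymptotic expansions.
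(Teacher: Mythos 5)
Your overall strategy coincides with the paper's: represent the field by single layer potentials built from the periodic Dirichlet Green's function $G_+^{\bk}$ so that the condition on $\p \R^d_+$ is automatic, rescale to the unit bubble with $\delta=\mu\varepsilon^2$, expand in $\varepsilon$, identify the monopole/dipole profiles $\alpha_0,\alpha_1$, and split the scattered field into a propagative part ($U_1^\varepsilon$) and an evanescent part ($U_{\rm BL}^\varepsilon$). The genuine gap is at the step you yourself flag as ``the main obstacle'': you never supply the mechanism that makes the inversion uniform in $\mu$, including at $\mu=\mu_M$. A plain Neumann-series/fixed-point argument around the leading operator cannot work as stated, because the $\varepsilon=0$ operator $\cA_0$ is not invertible at all --- its kernel is spanned by $(\psi_0^+,\psi_0^+)^t$ for \emph{every} $\mu$, not just near resonance --- so ``a small perturbation of an invertible operator'' is not available, and the assertion that the resolvent is controlled by $|g_s|^{-1}$ on the complement of the monopole mode is precisely what has to be proved. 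The paper's resolution is concrete: perturb by the rank-one operator $\cP$ built from $\Pi[\psi]=\bra \1_{\p D},\psi\ket\,\psi_0^+$, so that $\widetilde{\cA_0}=\cA_0+\cP$ is invertible with explicit inverse (Lemma~\ref{lem:widetildeA0}), split $\Psi=\alpha\Phi_1+\Psi_-$, and reduce \eqref{eq:APsi=F} \emph{exactly} to the scalar equation \eqref{eq:system} for $\alpha$. All the near-singularity is then isolated in an explicit scalar denominator, expanded to order $\varepsilon^3$ (this is where $\mu_M-\mu$ and the radiative term $\ri\varepsilon\mu M_1^2 k_d C_{D,\Lat}^+/|\WS|$ arise), while $\widetilde{\cA_0}+\cB(\varepsilon)$ is invertible for small $\varepsilon$ uniformly in $\mu$, so no delicate uniform spectral statement on a complementary subspace is ever needed.

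Two further points. First, the Wood-anomaly concern is moot in this scaling: the operators involve $G_+^{\varepsilon\bk}$, and since the effective wave vector is $\varepsilon\bk\to 0$, the expansion \eqref{eq:expansionGksharp} applies for all small $\varepsilon$; moreover the $O(1/\varepsilon)$ singular term of $G_\sharp^{\varepsilon\bk}$ cancels in the image difference \eqref{eq:def:Gk+}, so $G_+^{\varepsilon\bk}$ is regular in $\varepsilon$ --- it is this cancellation, not any uniform bound at $|\bar k|=k$, that the proof exploits. Second, to obtain the stated error $C\varepsilon|g_s(\mu,\varepsilon)|$ uniformly in $\mu>0$, one must push both the numerator and the denominator in the formula for $\alpha$ to order $\varepsilon^3$ with $O(\varepsilon^4)$ remainders (in particular compute $\bra\Phi_4,\cA_3(\Phi_1+\Phi_2)\ket$ and $\bra\Phi_4,\cA_2\widetilde{\cA_0}^{-1}\cA_1(\Phi_1+\Phi_2)\ket$, whose $C_1$ contributions cancel), then transfer the density estimate to the field through the approximation bounds of Lemma~\ref{lem:GFepsilon}; your outline stops at the leading-order balance and does not carry out this bookkeeping, which is exactly where the uniform constant in the theorem comes from.
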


The proof of the above theorem will be given in Section \ref{sec:proof}. 
The function $U_{\rm BL}^\varepsilon$ describes the behavior of the field near the bubbles (at distances of order $O(\varepsilon)$), while the function $U_1^\varepsilon$ describes the far field. Both functions have an $O(\varepsilon)$ part due to the dipole moment, and a resonant part $g_s(\mu, \varepsilon)$ due to the monopole moment.  

\medskip

	 In particular, from the behavior of $U^\varepsilon$ in the far-field, namely
	\[
		U^\varepsilon (\bX) \approx u_0 \re^{- \ri (k_d X_d + \bar{k} \cdot \bar{X})} + u_0  \left( - 1 +  2 \ri k_d \left( \varepsilon \alpha_{1, \infty} - g_s(\mu, \varepsilon) \alpha_{0, \infty} \right) \right)  \re^{\ri (k_d  X_d - \bar{k} \cdot \bar{X})},
	\]
	we directly read the reflection coefficient
	\[
		R_\varepsilon = - 1 +  2 \ri k_d  \left( \varepsilon \alpha_{1, \infty} - g_s(\mu, \varepsilon) \alpha_{0, \infty} \right).
	\]

As a consequence, we obtain
\begin{theorem}	
In the case when $\mu = \mu_M$, in the limit as $\varepsilon \to 0$, we get
	\[
		R_\varepsilon = -1 + 2 = 1.
	\]
Therefore the equivalent screen has Neumann boundary condition for the wave equation.
\end{theorem}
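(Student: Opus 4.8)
\noindent\textit{Proof strategy.} The statement is a corollary of Theorem~\ref{th:resonant_metascreen}. The plan is to evaluate the scattering function $g_s$ at the resonant value $\mu = \mu_M$, substitute it into the reflection coefficient $R_\varepsilon = -1 + 2\ri k_d\left(\varepsilon\alpha_{1,\infty} - g_s(\mu,\varepsilon)\alpha_{0,\infty}\right)$, pass to the limit $\varepsilon \to 0$, and read off the effective boundary condition.

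First I would set $\mu = \mu_M$ in~\eqref{eq:def:gs}. The factor $1 - \mu_M/\mu$ then vanishes, the surviving $\varepsilon$ in the denominator cancels the $\varepsilon$ in the numerator, and one obtains the $\varepsilon$-independent value
\[
	g_s(\mu_M, \varepsilon) = \frac{\varepsilon M_1}{- \varepsilon\ri\, M_1^2 k_d C_{D, \Lat}^+ / |\WS|} = \frac{\ri\, |\WS|}{M_1 k_d C_{D, \Lat}^+}, \qquad \varepsilon > 0 .
\]
In particular $|g_s(\mu_M,\varepsilon)| = O(1)$, so the error term $C\varepsilon|g_s(\mu,\varepsilon)|$ in Theorem~\ref{th:resonant_metascreen} is $O(\varepsilon)$: the far-field description of $U^\varepsilon$ becomes exact in the limit, the dipole contribution $\varepsilon\alpha_{1,\infty}$ disappears, and the piece $U_{\rm BL}^\varepsilon$, being exponentially localized near the screen, does not affect the macroscopic field.

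It remains to insert $g_s(\mu_M,\varepsilon)$ into the formula for $R_\varepsilon$ and let $\varepsilon \to 0$, which gives
\[
	\lim_{\varepsilon\to 0} R_\varepsilon = -1 - 2\ri k_d\, \frac{\ri\, |\WS|}{M_1 k_d C_{D, \Lat}^+}\, \alpha_{0,\infty} = -1 + \frac{2\, |\WS|\, \alpha_{0,\infty}}{M_1 C_{D, \Lat}^+} .
\]
The decisive step --- and the only non-algebraic one --- is the identity $|\WS|\, \alpha_{0,\infty} = M_1 C_{D, \Lat}^+$, which I would take from Lemma~\ref{lem:alpha01} together with the definition~\eqref{eq:def:M1} of $M_1$; equivalently it can be derived by pairing, via Green's identity, the harmonic function $\alpha_0$ against $x_d$ over one period of $\R^d_+ \setminus \overline{\Omega^1}$ and using the boundary data $\alpha_0 = 1$ on $\p\Omega^1$, $\alpha_0 = 0$ on $\p\R^d_+$, and $\alpha_0 \to \alpha_{0,\infty}$. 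This yields $\lim_{\varepsilon\to 0} R_\varepsilon = -1 + 2 = 1$. Inserting $R_\varepsilon \to 1$ into the far-field expansion of $U^\varepsilon$ displayed above, the limiting macroscopic field near the screen is
\[
	u_0\, \re^{-\ri\bar k\cdot\bar X}\left(\re^{-\ri k_d X_d} + \re^{\ri k_d X_d}\right) = 2 u_0\, \re^{-\ri\bar k\cdot\bar X}\cos(k_d X_d) ,
\]
whose normal derivative $\p_{X_d}$ vanishes on $\p\R^d_+ = \{X_d = 0\}$; hence the equivalent screen carries a Neumann (sound-hard) boundary condition, in contrast with the reference field $U_0 \propto \sin(k_d X_d)$, which realizes the original Dirichlet condition. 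Apart from the identity above, which I expect to be exactly the content of Lemma~\ref{lem:alpha01} and~\eqref{eq:def:M1}, the argument is pure substitution and presents no real obstacle.
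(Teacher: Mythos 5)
Your proposal is correct and follows essentially the same route the paper intends: set $\mu=\mu_M$ in \eqref{eq:def:gs} so that $g_s$ becomes the $\varepsilon$-independent value $\ri|\WS|/(M_1 k_d C_{D,\Lat}^+)$, insert it into $R_\varepsilon$, and use the identity $\alpha_{0,\infty}=M_1 C_{D,\Lat}^+/|\WS|$ from \eqref{eq:alpha0infty} in Lemma~\ref{lem:alpha01} to get $R_\varepsilon\to -1+2=1$. Your extra observations (that the error term $C\varepsilon|g_s|$ is $O(\varepsilon)$ at resonance, and that $R=1$ yields a $\cos(k_d X_d)$ far field with vanishing normal derivative, i.e. the Neumann condition) are consistent with, and slightly more explicit than, the paper's presentation.
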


\begin{remark}
	If we neglect the effect of $\alpha_{1, \infty}$ for simplicity, then 
		\[
		R_\varepsilon \approx -1 - 2 \left( \frac{\varepsilon \ri k_d M_1^2 C_{D, \Lat}^+ | \WS |^{-1}}{1 - \frac{\mu_M}{\mu} - \ri \varepsilon \frac{k_d M_1^2 C_{D, \Lat}^+}{| \WS |}} \right).
	\]
Using the frequency variable $\omega$, we see that 
	\begin{equation} \label{eq:Repsilon}
		R(\omega) \approx -1 - 2  \left( \frac{\ri \frac{\omega M_1^2 C_{D, \Lat}^+}{ v_d | \WS |}}{1 - \left(\frac{\omega}{\omega_M^+} \right)^2 - \ri  \frac{\omega M_1^2 C_{D, \Lat}^+}{ v_d | \WS |}} \right),
		\end{equation}
where $\omega_M^+ := \sqrt{\frac{\delta C_{D, \Lat}^+}{| D |}} v_b$	is the (periodic) Minnaert resonant frequency. Note that it is similar in expression to the usual Minnaert resonance $\omega_M$~\cite{Ammari2016_Minnaert, Minnaert1933}. Actually, if the bubbles are small compared to the typical size of the lattice, and are far away from each other, then $\omega_M^+ \approx \omega_M$.
\end{remark}

\begin{remark} \label{remark23}
	The term $\eta_{\rm rad} := \frac{\omega M_1^2 C_{D, \Lat}^+}{ v_d | \WS |}$ in the denominator of~\eqref{eq:Repsilon} is called the radiative damping. It is possible to include more realistic damping effects~\cite{Khismatullin2004}. In this case, one should replace $\eta_{\rm rad}$ by $\eta := \eta_{\rm rad} + \eta_{\rm other}$, where $\eta_{\rm other}$ includes all the remaining sources of damping. Note that in the particular case where $\eta_{\rm other} = \eta_{\rm rad}$ so that $\eta = 2 \eta_{\rm rad}$, then at the resonant frequency $\omega = \omega_M$ we obtain that $R(\omega) \approx 0$. This eventually explains the super-absorption phenomenon experimented in~\cite{Leroy2015}: all the incoming energy is dissipated with damping effects.
\end{remark}

The proof of Theorem~\ref{th:resonant_metascreen} is given in the next section. It relies on the theory of periodic layer potentials. 


\section{Proof of Theorem~\ref{th:resonant_metascreen}.}
\label{sec:proof}


%

\subsection{Periodic Green's functions}

We recall in this section the definition and properties of periodic layer potentials~\cite[Part 3]{Ammari2009_book}.
Recall that $\Lat$ is a lattice of the plane $\R^{d-1}$. We let $\RLat$ be its reciprocal lattice, and denote by $\WS$ the unit cell of $\Lat$. For instance, if $d = 2$ with $\Lat = a \Z$, then $\RLat = (2 \pi/a) \Z$ and $\WS = (-a/2, a/2)$.

\paragraph{Periodic Green's function without Dirichlet boundary condition.}


We first introduce, for $\bk \in \R^d$, the periodic Green function $G_\sharp^\bk$, solution to
\begin{equation} \label{eq:Gksharp}
	 \left( \Delta + k^2  \right)  G_\sharp^\bk(\bx) := \sum_{\bR \in \Lat} \re^{-\ri \bar{k} \cdot \bar{x}} \delta_{\bR}(\bx) = \sum_{\bR \in \Lat} \re^{-\ri \bar{k} \cdot \bR} \delta_{\bR}(\bx),
\end{equation}
with the outgoing radiation condition. Here and after $\delta_{\bR}(\bx)$ denotes the Dirac mass at the point $\bR$.  
It holds that $G_\sharp^\bk(\bx, \by) = G_\sharp^k (\bx - \by, 0) :=  G_\sharp^k (\bx - \by)$. 

The following lemma is needed.

\begin{lemma} \label{lemadd}
		The solutions to $(\partial_{xx}^2 + \alpha) f_\alpha = \delta_0$ are
		\[
			f_\alpha(x) = \left\{ \begin{aligned}
				& \frac12 | x |,  \quad \quad \quad \alpha = 0, \\
				& \dfrac{ 1}{2 \ri \sqrt{\alpha}} \re^{\ri \sqrt{\alpha} | x |},  \quad \alpha>0,\\
				& \dfrac{ 1}{2 \sqrt{-\alpha}} \re^{ \sqrt{-\alpha} | x |},  \quad \alpha<0.
			\end{aligned}
			\right.
		\]
	\end{lemma}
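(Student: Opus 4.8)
The plan is to construct $f_\alpha$ by the classical recipe for a one-dimensional Green's function of a constant-coefficient second-order operator. Away from the origin, $f_\alpha$ must solve the homogeneous equation $f'' + \alpha f = 0$, whose solution space is spanned by $\{1,x\}$ when $\alpha = 0$, by $\{\re^{\ri\sqrt{\alpha}x},\re^{-\ri\sqrt{\alpha}x}\}$ when $\alpha>0$, and by $\{\re^{\sqrt{-\alpha}x},\re^{-\sqrt{-\alpha}x}\}$ when $\alpha<0$. So I would first write $f_\alpha$ as a piecewise linear combination of these on $(-\infty,0)$ and on $(0,\infty)$, with two undetermined coefficients on each side, and then fix the four coefficients from the behaviour at infinity together with the matching conditions at $x=0$.

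The behaviour at infinity singles out one branch on each half-line: for $\alpha>0$ the outgoing radiation condition retains only $\re^{\ri\sqrt{\alpha}x}$ on $\{x>0\}$ and only $\re^{-\ri\sqrt{\alpha}x}$ on $\{x<0\}$ (any admixture of the opposite exponential would contribute an incoming part); for $\alpha<0$ one keeps the exponential $\re^{\sqrt{-\alpha}|x|}$ selected by the chosen determination of $\sqrt{\alpha}$, i.e. the analytic continuation from $\alpha>0$; and for $\alpha=0$ one discards the constant and keeps the symmetric ramp $|x|$. The two remaining coefficients are then pinned down by two conditions at $x=0$: continuity of $f_\alpha$, which together with the symmetric form already chosen forces the two one-sided profiles to coincide, and the jump relation obtained by integrating the distributional identity $(\partial_{xx}^2+\alpha)f_\alpha=\delta_0$ over $(-r,r)$ and letting $r\to 0$; since $f_\alpha$ is bounded near $0$ the term $\alpha f_\alpha$ contributes nothing in the limit, leaving $f_\alpha'(0^+)-f_\alpha'(0^-)=1$. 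Solving this $2\times2$ linear system in each case yields the stated constants $\tfrac12$, $\tfrac1{2\ri\sqrt{\alpha}}$, $\tfrac1{2\sqrt{-\alpha}}$.

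To finish rigorously I would verify a posteriori that the candidate $f_\alpha$ so obtained does solve $(\partial_{xx}^2+\alpha)f_\alpha=\delta_0$ in $\cD'(\R)$: pairing with $\varphi\in C_c^\infty(\R)$ and integrating by parts twice, the bulk contributions cancel because $f_\alpha$ solves the homogeneous equation off the origin, the continuity of $f_\alpha$ kills the boundary term from the first integration, and the unit jump of $f_\alpha'$ reproduces exactly $\varphi(0)$; uniqueness in the relevant class follows since the difference of two admissible solutions is a global homogeneous solution satisfying the same radiation/decay requirement, hence zero. There is no genuine obstacle here — it is a textbook computation. The only points that need a little care are the bookkeeping at infinity, namely which determination of $\sqrt{\alpha}$ (hence which exponential) is retained when $\alpha<0$ and the fact that it is the radiation condition, not square-integrability, that selects $f_\alpha$ when $\alpha>0$, and getting the jump of $f_\alpha'$ right (equal to $1$, not $2$), which is exactly where the factor $\tfrac12$ in each formula originates.
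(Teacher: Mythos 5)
Your proposal is sound, and its final step coincides with the paper's entire proof: the paper does not derive the formulas at all, it simply verifies them, using the distributional identities $|x|'=2\Theta(x)-1$, $|x|''=2\delta_0$ and $(|x|')^2=1$, that each stated $f_\alpha$ satisfies $(\partial_{xx}^2+\alpha)f_\alpha=\delta_0$. Your a posteriori check (pairing with a test function, integrating by parts, using continuity of $f_\alpha$ and the unit jump of $f_\alpha'$) is that same computation in a different guise. What you add is the constructive front end — homogeneous solutions on each half-line, selection by behaviour at infinity, matching at $x=0$ — which explains where the constants $\tfrac12$, $\tfrac1{2\ri\sqrt{\alpha}}$, $\tfrac1{2\sqrt{-\alpha}}$ come from rather than just confirming them; that is a legitimate, slightly longer route.

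One caveat concerns your selection argument for $\alpha<0$: it does not actually single out the formula as stated. The analytic continuation of $\tfrac{1}{2\ri\sqrt{\alpha}}\re^{\ri\sqrt{\alpha}|x|}$ from $\alpha>0$ with the principal branch $\sqrt{\alpha}=\ri\sqrt{-\alpha}$, equivalently the requirement of decay at infinity, gives $-\tfrac{1}{2\sqrt{-\alpha}}\re^{-\sqrt{-\alpha}|x|}$, whereas the lemma displays the growing exponential $\tfrac{1}{2\sqrt{-\alpha}}\re^{\sqrt{-\alpha}|x|}$, which corresponds to the branch $\sqrt{\alpha}=-\ri\sqrt{-\alpha}$ and is not selected by any decay or radiation condition — so your uniqueness remark cannot apply in that case. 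This is harmless for the lemma itself, since the two candidates differ by a homogeneous solution and both satisfy the equation, which is all the paper checks (the lemma really exhibits particular solutions; note that the expansion \eqref{eq:expansionGksharp}, and \eqref{eq:G0expansion}, in fact use the decaying branch $-\tfrac{1}{2\sqrt{-\alpha}}\re^{-\sqrt{-\alpha}|x|}$ for the nonzero reciprocal-lattice modes). But if you keep the derivation, you must state explicitly which determination of $\sqrt{\alpha}$ you use for $\alpha<0$, because "the continuation from $\alpha>0$" as usually understood produces the other particular solution.
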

\begin{proof}
	It is enough to check that $f_\alpha$ is the solution to $(\partial_{xx}^2 + \alpha) f_\alpha = \delta_0$. Recall that in the sense of distributions, $| x |' = 2 \Theta(x) - 1$, where $\Theta$ is the Heaviside function $\Theta (x) := \1(x > 0)$, and that $| x|'' = 2 \Theta'(x) = 2 \delta_0$. Note also that $\left( |x|' \right)^2 = 1$. The proof follows by standard calculations.
\end{proof}
The following result holds. 
\begin{lemma} \label{lem:Gk} If $\bk = \bnull$, then
\begin{equation} \label{eq:G0expansion}
	G_\sharp^\bnull((\bar{x}, x_d)) = \frac{ | x_d |}{2 | \WS |} - \sum_{\bl \in \RLat \setminus \{ \bnull \}} \frac{1}{2  | \WS | \, | \bl |} \re^{- | \bl | \, |x_d|} \re^{ \ri (\bl \cdot \bar{x})}. 
\end{equation}
If $\bk \in \R^d$ satisfies $k^2 < \inf \{ | \bl - \bar{k}|, \ \bl \in \RLat \setminus \{ \bnull \} \}$, then
\begin{equation} \label{eq:expansionGksharp}
	G_\sharp^\bk((\bar{x}, x_d)) = \dfrac{\re^{-\ri \bar{k} \cdot \bar{x}} \re^{\ri k_d  \, |x_d|}}{2 \ri k_d | \Gamma|} - \sum_{\bl \in \RLat \setminus\{ 0 \}} \frac{\re^{-\ri \bar{k} \cdot \bar{x}}}{2  | \WS | \sqrt{| \bl - \bar{k}|^2 - k^2}} \re^{- \sqrt{ | \bl - \bar{k}|^2 - k^2} | x_d |} \re^{\ri (\bl \cdot \bar{x})}.
\end{equation}
\end{lemma}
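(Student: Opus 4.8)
The plan is to compute the periodic Green's function via a Fourier series in the $\bar{x}$ variable along the lattice directions, reducing the $d$-dimensional PDE to a family of one-dimensional ODEs indexed by the reciprocal lattice. First I would write $G_\sharp^\bk$ as a quasi-periodic function: because the right-hand side of~\eqref{eq:Gksharp} is the quasi-periodic Dirac comb $\sum_{\bR \in \Lat} \re^{-\ri \bar{k}\cdot\bR}\delta_\bR$, the solution satisfies $G_\sharp^\bk(\bar{x}+\bR, x_d) = \re^{-\ri\bar{k}\cdot\bR} G_\sharp^\bk(\bar{x},x_d)$, so that $\re^{\ri\bar{k}\cdot\bar{x}} G_\sharp^\bk$ is genuinely $\Lat$-periodic in $\bar{x}$ and admits a Fourier expansion $\sum_{\bl\in\RLat} c_\bl(x_d)\,\re^{\ri\bl\cdot\bar{x}}$ with $|\WS|^{-1}$ normalization. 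Substituting into $(\Delta+k^2)G_\sharp^\bk = |\WS|^{-1}\sum_{\bl\in\RLat}\re^{\ri(\bl-\bar{k})\cdot\bar{x}}\delta_0(x_d)$ (using the Poisson summation / Fourier representation of the Dirac comb) gives, for each $\bl$, the one-dimensional equation $c_\bl'' + (k^2 - |\bl-\bar{k}|^2)c_\bl = |\WS|^{-1}\delta_0$.

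Next I would invoke Lemma~\ref{lemadd} with $\alpha = k^2 - |\bl-\bar{k}|^2$ to solve each ODE. Under the stated spectral gap hypothesis $k^2 < \inf\{|\bl-\bar{k}|^2 : \bl\in\RLat\setminus\{\bnull\}\}$, the $\bl=\bnull$ mode has $\alpha = k^2 > 0$ (giving the outgoing term $\frac{1}{2\ri k_d}\re^{\ri k_d|x_d|}$, since $k_d = |k| = \sqrt{k^2}$ here, matching the radiation condition), while every $\bl\neq\bnull$ mode has $\alpha = k^2 - |\bl-\bar{k}|^2 < 0$, giving the exponentially decaying term $\frac{1}{2\sqrt{|\bl-\bar{k}|^2-k^2}}\re^{-\sqrt{|\bl-\bar{k}|^2-k^2}\,|x_d|}$; dividing through by $|\WS|$ and reassembling the series yields~\eqref{eq:expansionGksharp}. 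The case $\bk = \bnull$ is the specialization $\bar{k}=0$, $k=0$: now $\alpha = -|\bl|^2 < 0$ for $\bl\neq\bnull$, producing the exponential terms with $|\bl|$, while the $\bl=\bnull$ mode solves $c_\bnull'' = |\WS|^{-1}\delta_0$, whose solution is $\frac{|x_d|}{2|\WS|}$ by the $\alpha=0$ branch of Lemma~\ref{lemadd} — this recovers~\eqref{eq:G0expansion}. I would present the $\bk=\bnull$ case either as a direct limit or by running the same computation from scratch, since it is essentially identical.

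The main obstacle, and the step deserving the most care, is justifying the Fourier/Poisson-summation manipulation of the Dirac comb and the convergence of the resulting series. One must verify that $\sum_{\bR\in\Lat}\re^{-\ri\bar{k}\cdot\bR}\delta_\bR(\bx) = |\WS|^{-1}\sum_{\bl\in\RLat}\re^{\ri(\bl-\bar{k})\cdot\bar{x}}\otimes\delta_0(x_d)$ holds in the appropriate distributional sense, and that the termwise solution assembles to an actual distributional solution of~\eqref{eq:Gksharp} with the correct radiation behavior — in particular that the tail $\sum_{\bl\neq\bnull}$, whose terms decay like $|\bl|^{-1}\re^{-|\bl||x_d|}$, converges (absolutely for $x_d\neq 0$, and in a suitable weak sense across $x_d=0$ where it has the expected jump in the normal derivative). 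The remaining work — plugging into Lemma~\ref{lemadd}, bookkeeping the sign of $\alpha$, and matching constants — is routine, so I would treat the convergence and the single-valuedness of the radiating solution as the crux and merely sketch the rest.
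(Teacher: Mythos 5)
Your proposal follows essentially the same route as the paper: factor out $\re^{\ri\bar{k}\cdot\bar{x}}$ to obtain a genuinely $\Lat$-periodic function, Fourier-expand in $\bar{x}$ using the Poisson summation formula, reduce to the one-dimensional equations $\left( \p^2_{x_d x_d} + k^2 - | \bl - \bar{k}|^2 \right) c_\bl = |\WS|^{-1}\delta_0$, and apply Lemma~\ref{lemadd} with the sign of $\alpha$ dictated by the spectral-gap hypothesis. One small slip worth fixing: for the $\bl=\bnull$ mode the constant is $\alpha = k^2 - |\bar{k}|^2 = k_d^2$ (not $\alpha = k^2$), so $\sqrt{\alpha}=k_d$ holds without assuming $k_d=k$; with that correction your argument coincides with the paper's proof.
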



\begin{proof}
In order to compute $G_\sharp^\bk(\bx)$, we introduce $\widetilde{G_\sharp^\bk}(\bx) = \re^{\ri \bar{k} \cdot \bar{\bx}} G_\sharp^\bk(\bx)$, so that
$G_\sharp^\bk(\bx) = \re^{-\ri \bar{k} \cdot \bar{\bx}} \widetilde{G_\sharp^\bk}(\bx)$.
In particular, $\widetilde{G_\sharp^\bk}$ is the $\Lat$-periodic solution to $\left( \Delta - 2 \ri \bar{k} \cdot \nabla_{\bar{x}} + k_d^2  \right)  \widetilde{G_\sharp^\bk}(\bx) := \sum_{\bR \in \Lat} \delta_{\bR}(\bx)$. We consider its Fourier expansion, and write
	\[
		\widetilde{G_\sharp^\bk}(\bar{x}, x_d)  = \sum_{\bl \in \RLat} c_\bl(x_d) \re^{ \ri \bl \cdot  \bar{x}}.
	\]
	Thanks to the Poisson summation formula
	\[
		\sum_{\bR \in \Lat} \delta (\bx + \bR) = \frac{1}{| \WS |} \sum_{\bl \in \RLat} \re^{\ri \bl \cdot \bar{x}},
	\]
	we obtain that $c_\bl$ must be the solution to
	\[
		\forall \bl \in \RLat, \quad \left( \p^2_{x_d x_d} + k^2 - | \bl - \bar{k}|^2 \right) c_\bl = \frac{1}{| \WS |} \delta_0.
	\]
	The proof then follows from Lemma \ref{lemadd}. 
\end{proof}
\begin{remark}
It is possible to simplify~\eqref{eq:G0expansion} following~\cite[pp. 813-814]{Collin1960} (see for instance~\cite{Ammari2016_plasmonicMetasurfaces}).
\end{remark}

From~\eqref{eq:expansionGksharp}, we see that $G_{\sharp}^{\varepsilon \bk}$ has a formal expansion of the form
\begin{equation} \label{eq:TaylorGsharp}
	G_\sharp^{\varepsilon \bk}(\bx) = \dfrac{1}{2 \ri \varepsilon k_d | \WS |} + G_{0,\sharp}^\bk(\bx)  + \sum_{n=1}^\infty \varepsilon^n G_{n,\sharp}^\bk(\bx),
\end{equation}
where the operators $G_{n, \sharp}^\bk$ can be computed explicitly. For instance, together with 
(\ref{eq:G0expansion}), we have
\begin{equation} \label{eq:def:G0+k}
	G_{0, \sharp}^{\bk} = \dfrac{k_d | x_d | - \bar{k} \cdot \bar{x}}{2 k_d | \WS |} - \sum_{\bl \in \RLat \setminus \{ \bnull \}} \frac{\re^{- | \bl | \, |x_d|} \re^{ \ri (\bl \cdot \bar{x})}}{2  | \WS | \, | \bl |} = G_\sharp^\bnull - \dfrac{\bar{k} \cdot \bar{x}}{2 k_d | \Gamma|}.
\end{equation}
We will also need the exact formula for $G_{1, \sharp}^\bk$. After some straightforward calculations we find that
\begin{equation} \label{eq:def:G1sharp}
	G_{1,\sharp}^\bk(\bx) = \frac{ \ri \left( k_d | x_d | - \bar{k} \cdot \bar{x}\right)^2}{4 k_d | \WS |}
		- \bar{k} \cdot g_{1,\sharp}(\bx),
\end{equation}
where $g_{1, \sharp}$ is a function independent of $\bk$. Explicitly,
\begin{equation*}
		g_{1,\sharp}(\bx) := \sum_{\bl \in \RLat \setminus \{ \bnull \}}  \frac{\re^{- | \bl | \, |x_d|} \re^{ \ri (\bl \cdot \bar{x})}}{2  | \WS | \, | \bl |} \left(  \frac{\bl}{| \bl |^2} + \frac{\bl}{| \bl |} | x_d | - \ri \bar{x}  \right).
\end{equation*}
By change of variable $\bl \to - \bl$, we obtain that
\begin{equation} \label{eq:g1sharp}
	g_{1,\sharp}(\bx) = \frac{\ri}{2}  \sum_{\bl \in \RLat \setminus \{ \bnull \}}   \frac{\re^{- | \bl | \, |x_d|} }{ | \WS | \, | \bl |} \left(  \left(\frac{\bl}{| \bl |^2} + \frac{\bl}{| \bl |} | x_d | \right)  \sin(\bl \cdot \bar{x}) -  \bar{x} \cos(\bl \cdot \bar{x})  \right).
\end{equation}
In particular, we see that $g_{1, \sharp}$ satisfies the symmetry relations
\begin{equation} \label{eq:relations_g1sharp}
	g_{1,\sharp}(\bar{x}, x_d) = g_{1,\sharp}(\bar{x}, -x_d)  = - g_{1,\sharp}(- \bar{x}, x_d).
\end{equation}

\medskip

Note that there is a singularity in~\eqref{eq:TaylorGsharp} as $\varepsilon$ goes to $0$.
Finally, expanding \eqref{eq:Gksharp} in powers of $\varepsilon$ leads to the equations 
\begin{equation} \label{eq:magicRelations}
\left\{ \begin{aligned}
 \Delta G_{0,\sharp}^\bk &= \sum_{\bR \in \Lat} \delta_{\bR}, \\
 \Delta G_{1,\sharp}^\bk &= \sum_{\bR \in \Lat} -\ri \bR \delta_{\bR}, \\
\Delta G_{n+2,\sharp}^\bk &+ k^2 G_{n, \sharp}^\bk = \sum_{\bR \in \Lat} \dfrac{(-\ri \bar{k} \cdot \bR )^{n+2}}{(n+2)!} \delta_{\bR}, \quad \forall n \geq 2.
\end{aligned}
\right.
\end{equation}

The periodic Green function would be an adequate tool to study the problem {\em without} the Dirichlet boundary condition $U^\varepsilon = 0$ on $\p \R^d_+$. 
In this article however, we study the problem {\em with} the Dirichlet boundary condition to explain the phenomenon seen in~\cite{Leroy2015} for instance.

\paragraph{Periodic Green's function with Dirichlet boundary condition}
We introduce the Dirichlet Green function defined by
\begin{equation} \label{eq:def:Gk+}
	G_{+}^\bk(\bx, \by) := G_\sharp^\bk ( ( \bar{x} - \bar{y}, x_d - y_d) ) - G_\sharp^\bk ( ( \bar{x} - \bar{y}, x_d + y_d) ).
\end{equation}
This Green function is no longer translational invariant (in the sense $G_{+}^\bk(\bx, \by) \neq G_{+}^\bk(\bx - \by, \bnull)$), but satisfies the Dirichlet boundary condition $G^\bk_+ = 0$ on $\p \R^d_+$. From~\eqref{eq:expansionGksharp}, we deduce that $G_{+}^{\varepsilon \bk}$ admits an expansion of the form
\begin{equation} \label{eq:expansionGk+}
	G_{+}^{\varepsilon \bk}(\bx, \by) = \sum_{n=0}^\infty \varepsilon^n G_{n, +}^\bk(\bx, \by),
\end{equation}
where
\[
	G_{n,+}^\bk(\bx, \by) := G_{n, \sharp}^\bk ( (\bar{x} - \bar{y}, x_d - y_d ) ) - G_{n, \sharp}^\bk ( ( \bar{x} - \bar{y}, x_d + y_d) ).
\]
Note that $G_+^{\varepsilon \bk}$ is no longer singularity as $\varepsilon$ goes to $0$. This makes the problem with Dirichlet boundary condition easier to study analytically. Moreover, from (\ref{eq:def:G0+k}) we can check that
\[
	G_{\bnull, +}^\bk(\bx, \by) = G_{+}^{\bk = \bnull}(\bx, \by).
\]
Note that this equality does not hold for the periodic Green function without Dirichlet boundary condition (see (\ref{eq:def:G0+k}).  We also need the expression of $G_{1,+}^\bk$. We get
\begin{equation} \label{eq:def:G1+}
	G_{1, +}^\bk(\bx, \by) = \frac{-\ri}{2 | \WS | }  \left( 2 k_d x_d y_d + \bar{k} \cdot (\bar{x} - \bar{y})\left( | x_d - y_d | - | x_d + y_d | \right) \right) - \bar{k} \cdot g_{1, +} (\bx, \by),
\end{equation}
with
\[
	g_{1,+} := g_{1,\sharp}(\bar{x} - \bar{y}, x_d - y_d) - g_{1,\sharp}(\bar{x} - \bar{y}, x_d + y_d).
\]
From~\eqref{eq:relations_g1sharp}, we see that $g_{1,+}$ satisfies the symmetry relation
\begin{equation} \label{eq:symg1+}
	g_{1,+}(\bx, \by) = - g_{1,+} (\by, \bx).
\end{equation}

\medskip

Finally, from~\eqref{eq:magicRelations}, we deduce that
\begin{equation*} 
	\Delta_\by G_{1,+}^\bk(\bx, \by) = \sum_{\bR \in \Lat} (-\ri \bar{k} \cdot \bR ) \delta_{\bR}(\bar{x} - \bar{y}) \left( \delta_0(x_d - y_d) - \delta_0(x_d + y_d) \right),
\end{equation*}
and that, for $n \in \N$, 
\begin{equation*}
	\Delta_\by G_{n+2,+}^\bk(\bx, \by)  + k^2 G_{n, +}^\bk(\bx, \by) =  \sum_{\bR \in \Lat} \dfrac{(-\ri \bar{k} \cdot \bR )^{n+2}}{(n+2)!}  \delta_{\bR}(\bar{x} - \bar{y}) \left( \delta_0(x_d - y_d) - \delta_0(x_d + y_d) \right).
\end{equation*}
In particular, if $\bx, \by \in \bar{D}$, then, since $\delta_\bR(\bar{x} - \bar{y}) = 0$ except for $\bR = \bnull$ (and $
\bar{x} = \bar{y}$), we have
\begin{equation} \label{eq:magicRelations+}
	\Delta_{\bx} G_{1,+}^\bk = \Delta_\by G_{1,+}^\bk = 0
	\quad \text{and} \quad
	\forall n \in \N, \quad \Delta_\by G_{n+2,+}^\bk(\bx, \by)  = - k^2 G_{n, +}^\bk(\bx, \by).
\end{equation}

\subsection{Periodic-Dirichlet layer potentials}

We now introduce the periodic-Dirichlet layer potential operators. 
We denote by $H^{-1/2} := H^{-1/2}(\p D)$ and by $H^{1/2} := H^{1/2}(\p D)$ the usual fractional Sobolev spaces on surfaces~\cite{Lions1972}. 
In the sequel, we use $\bra \cdot, \cdot \ket$ for the $H^{-1/2}, H^{1/2}$ duality pairing. We also introduce $H^{-1/2}_0 := \left\{ f \in H^{-1/2}, \ \bra f, 1 \ket = 0\right\}$.
The periodic-Dirichlet single-layer potential $\cS_+^{\bk} : H^{-1/2} \to H^{1/2}$ and the periodic-Dirichlet Dirichlet-to-Neumann operator $\cK_+^{\bk, *} : H^{-1/2} \to H^{-1/2}$ are respectively defined, for smooth functions $\psi \in C^\infty(\p D)$ by 
\[
	\forall \bx \in \p D, \quad \cS_{+}^{\bk}[\psi](\bx) := \int_{\p D} G_+^{\bk}(\bx, \by) \psi(\by) \rd \sigma(\by),
	\quad
	\cK_{+}^{\bk,*}[\psi](\bx) := \int_{\p D} \dfrac{ \p G_+^\bk}{\p \nu_\bx}(\bx, \by) \psi(\by) \rd \sigma(\by).
\]
For simplicity, we write $\cK_+^* := \cK_+^{\bnull,*}$ and $\cS_+ := \cS_+^\bnull$. 
We also introduce the operator $\widetilde{\cS_{+}^\bk} : H^{-1/2} \to H^1_\loc(\R^d)$ defined by
\begin{equation} \label{eq:def:tildeSk}
	\forall \psi \in H^{-1/2}, \quad \forall \bx \in \R^d, \quad \widetilde{\cS_{+}^\bk}[\psi](\bx) := \int_{\p D} G_+^\bk(\bx, \by) \psi(\by) \rd \sigma(\by).
\end{equation}

We have the following standard result.    
 
\begin{lemma} \label{lem:SkKk}~
\begin{enumerate}[(i)]
	\item For all $\bk \in \R^d$, the operator $\cS_{+}^{\bk} : H^{-1/2} \to H^{1/2}$ is an bounded operator with bounded inverse. 
		Moreover, it holds that $\cS_+^* = \cS_+$.
	\item For all $\bk \in \R^d$, the operator $\cK_{+}^{\bk,*}$ is a compact operator on $H^{-1/2}$,
	and the operator $\cK_+^\bk$ is a compact operator on $H^{1/2}$.
	\item \textbf{(jump formulae)} It holds that
	\[
		\widetilde{\cS_{+}^\bk} \Big|_+ = \widetilde{\cS_{+}^\bk} \Big|_- = \cS_+^\bk \quad \text{and} \quad
		\left( \partial_\nu \widetilde{\cS_{+}^\bk} \right) \Big|_\pm = \pm \frac12 + \cK_+^{\bk,*}.
	\]
	\item It holds that $\sigma(\cK_{+}^*) = \sigma(\cK_+) \subset (-1/2, 1/2]$ and that
	\[
		\Ker \left( \cK_{+} - \frac12 \right) = \Span \{ \1_{\p D} \},
		\quad \text{and} \quad
		\Ker \left( \cK_{+}^* - \frac12 \right) = \Span \{ \psi_0^+ \},
	\]
	where $\1_{\p D} \in H^{1/2}$ is the constant function with value $1$ on $\p D$, and where $\psi_0^+ \in H^{-1/2}$ is such that $\bra \1_{\p D}, \psi_0^+ \ket = 1$. 
	\item The operator $(-\frac12 + \cK_+^*)$ acting on $H^{-1/2}_0$ is invertible with bounded inverse. 
	\end{enumerate}
\end{lemma}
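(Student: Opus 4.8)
\emph{Proof plan.} The plan is to reduce every assertion to the classical theory of Helmholtz layer potentials on the bounded smooth domain $D$. Using~\eqref{eq:def:Gk+}, I would first write
\[
	G_+^\bk(\bx, \by) = G^k(\bx - \by) + R^\bk(\bx, \by),
\]
where $G^k$ is the free-space outgoing Helmholtz Green function on $\R^d$ and $R^\bk$ gathers the contributions of the lattice images $\bR \neq \bnull$ in $G_\sharp^\bk(\bar x - \bar y, x_d - y_d)$ together with the entire mirror term $-G_\sharp^\bk(\bar x - \bar y, x_d + y_d)$. Since $\bar D \subset \R^d_+$ and $\bar D$ sits inside a fundamental cell, all these image sources stay at positive distance from $\bar D$, so $R^\bk$ is smooth (real-analytic) on $\bar D \times \bar D$, and the integral operator it induces on $\p D$ is infinitely smoothing, hence compact between all the Sobolev spaces in play. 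Thus $\cS_+^\bk$, $\cK_+^{\bk,*}$ and $\cK_+^\bk$ differ from the usual free-space single-layer, adjoint double-layer and double-layer operators on $\p D$ only by compact operators. This immediately yields (ii), (iii), and the boundedness in (i); and $\cS_+^* = \cS_+$ follows from the evenness $G_\sharp^\bnull(\bx) = G_\sharp^\bnull(-\bx)$ seen in~\eqref{eq:G0expansion}.

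For the invertibility of $\cS_+^\bk$ in (i), I would observe that $\cS_+ = \cS_+^\bnull$ is coercive: for $\psi \in H^{-1/2}$, $-\bra \psi, \cS_+ \psi \ket = \| \nabla \widetilde{\cS_+}[\psi] \|_{L^2(\R^d_+)}^2$, which vanishes only if $\widetilde{\cS_+}[\psi]$ — harmonic off $\p D$, decaying, and zero on $\p \R^d_+$ — vanishes identically, i.e. only if $\psi = 0$. Hence $\cS_+$ is a self-adjoint isomorphism and $\cS_+^\bk = \cS_+ + (\text{compact})$ is Fredholm of index zero; injectivity of $\cS_+^\bk$ then follows from uniqueness for the problem solved by $u := \widetilde{\cS_+^\bk}[\psi]$ (Helmholtz in $D$ and in $\R^d_+ \setminus \bar D$, zero trace on $\p D$ and on $\p \R^d_+$, outgoing), valid under the standing assumptions on $k$, after which $\psi = \p_\nu u|_+ - \p_\nu u|_- = 0$.

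Part (iv) is where the real work lies, and I would attack it by the Plemelj--Calder\'on symmetrization argument adapted to the half-space Dirichlet setting. Applying the jump relations of (iii) to $\widetilde{\cS_+}$ gives the Calder\'on identity $\cS_+ \cK_+^* = \cK_+ \cS_+$, so $\cK_+^* = \cS_+^{-1} \cK_+ \cS_+$ is similar to $\cK_+$ and $\sigma(\cK_+^*) = \sigma(\cK_+)$; moreover $\cK_+^*$ is self-adjoint for the inner product $\bra \phi, \psi \ket_* := - \bra \phi, \cS_+ \psi \ket$, hence has real spectrum, and being compact, $\sigma(\cK_+^*) = \{0\} \cup \{\text{eigenvalues}\}$. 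For an eigenpair $\cK_+^* \psi = \lambda \psi$, I would set $u := \widetilde{\cS_+}[\psi]$, use the jump formulae $\p_\nu u|_\pm = (\pm\tfrac12 + \lambda)\psi$, and apply Green's identity in $D$ and in $\R^d_+ \setminus \bar D$ — crucially, the Dirichlet Green function makes the $\p \R^d_+$ boundary term disappear — to get that $\int_D |\nabla u|^2$ and $\int_{\R^d_+ \setminus \bar D} |\nabla u|^2$ equal $(\tfrac12 - \lambda)$ and $(\tfrac12 + \lambda)$ times the positive quantity $-\bra \psi, \cS_+ \psi \ket$. Nonnegativity forces $|\lambda| \le \tfrac12$; $\lambda = -\tfrac12$ forces $u$ constant on the connected unbounded exterior region, hence $\equiv 0$ there (it vanishes on $\p\R^d_+$), hence $\psi = 0$, so $\sigma(\cK_+^*) \subset (-\tfrac12, \tfrac12]$; and $\lambda = \tfrac12$ forces $u$ constant in $D$, so $u$ is (a multiple of) the exterior equilibrium potential and $\psi$ the corresponding jump of normal derivatives, giving $\Ker(\cK_+^* - \tfrac12) = \Span\{\psi_0^+\}$ with $\bra \1_{\p D}, \psi_0^+ \ket$ a nonzero multiple of that potential's Dirichlet energy, normalizable to $1$. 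Finally $\cS_+ \psi_0^+$ is a nonzero multiple of $\1_{\p D}$, so the Calder\'on identity gives $\cK_+ \1_{\p D} = \tfrac12 \1_{\p D}$, and $\dim \Ker(\cK_+ - \tfrac12) = \dim \Ker(\cK_+^* - \tfrac12) = 1$ by Fredholm duality.

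Part (v) would then be quick: $H^{-1/2}_0$ is $\cK_+^*$-invariant since $\bra \cK_+^* \psi, 1 \ket = \bra \psi, \cK_+ \1_{\p D} \ket = \tfrac12 \bra \psi, 1 \ket$, while $\psi_0^+ \notin H^{-1/2}_0$ because $\bra \1_{\p D}, \psi_0^+ \ket = 1$; hence $(-\tfrac12 + \cK_+^*)$ is injective on $H^{-1/2}_0$, and being a compact perturbation of $-\tfrac12\,\mathrm{Id}$ restricted to this closed, finite-codimension, invariant subspace, it is Fredholm of index zero there, hence an isomorphism with bounded inverse. The main obstacle, as indicated, is part (iv) — getting the symmetrized framework and the energy identity right, with the Dirichlet Green function doing the work of killing the $\p \R^d_+$ terms — together with the two uniqueness statements (Laplacian in the perforated half-space, Helmholtz under the paper's hypotheses on $k$) underpinning the injectivity in (i); everything else is a routine transcription of the free-space theory.
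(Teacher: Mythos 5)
The paper never proves this lemma: it is stated as ``the following standard result'' and left to the classical layer-potential literature (the Ammari--Kang framework cited for periodic potentials), so there is no in-paper argument to compare against line by line. Your plan is, in substance, the correct adaptation of that standard theory to the periodic-Dirichlet kernel, and it identifies precisely the points where the adaptation matters: the splitting $G_+^\bk = G^k + (\text{smooth remainder from the lattice images and the mirror term})$, which gives (ii), (iii) and Fredholmness; the strict positivity of $-\bra \psi, \cS_+ \psi\ket$ (the Dirichlet condition on $\p\R^d_+$ removes the constant that obstructs this in free space, and also handles the $d=2$ logarithmic-capacity issue); the Calder\'on identity $\cS_+\cK_+^* = \cK_+\cS_+$, whose usual interior-representation proof goes through verbatim because $G_+^\bnull(\bx,\cdot)$ has only the central singularity in $\bar D$; and the energy identities in which the $\p\R^d_+$ and infinity contributions vanish, yielding $\sigma\subset(-\tfrac12,\tfrac12]$ and the one-dimensional kernels, from which (v) follows by the invariance of $H^{-1/2}_0$ and the Fredholm alternative.

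Two small points deserve attention. First, invertibility of $\cS_+^\bk$ genuinely cannot hold ``for all $\bk\in\R^d$'' as the lemma claims: if $k^2$ is an interior Dirichlet eigenvalue of $D$, the Green representation of the eigenfunction shows $\cS_+^\bk$ has nontrivial kernel, exactly as in free space; your hedge ``under the standing assumptions on $k$'' is therefore not optional but necessary (and harmless here, since the paper only uses $\cS_+^{\varepsilon\bk}$ with $\varepsilon$ small). Second, in (iv) your argument as written shows that \emph{if} $\tfrac12$ is an eigenvalue then its eigenspace is $\Span\{\cS_+^{-1}\1_{\p D}\}$, and then infers $\cK_+\1_{\p D}=\tfrac12\1_{\p D}$ from the existence of $\psi_0^+$; to avoid circularity, establish membership directly, e.g.\ note that $u:=\widetilde{\cS_+}[\cS_+^{-1}\1_{\p D}]$ is harmonic in $D$ with constant boundary trace, hence constant in $D$, so $\p_\nu u|_- = (-\tfrac12+\cK_+^*)[\cS_+^{-1}\1_{\p D}]=0$, or equivalently prove the Gauss identity $\cK_+[\1_{\p D}]=\tfrac12\1_{\p D}$ from $\Delta_\by G_+^\bnull(\bx,\by)=\delta_\bx(\by)$ on $\bar D$. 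With these repairs the plan is complete.
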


We now introduce two constants. Recall that $\psi_0^+$ is defined by
$\Ker \left( \cK_{+}^* - \frac12 \right) = \Span \{ \psi_0^+ \}$ and $\bra \1_{\p D}, \psi_0^+ \ket = 1$. We define
\begin{eqnarray}
C_{D, \Lat}^+ &=& - \left\bra \cS_{+}^{-1} \1_{\p D}, \1_{\p D} \right\ket, \\
M_1 &=& \left\bra \psi_0^+, x_d |_{\p D} \right\ket .\label{eq:def:M1}
\end{eqnarray}

We have the following result.
\begin{lemma}
\begin{enumerate}[(i)]
\item
For all $\bx \in D$, $\widetilde{\cS_{+}}[\psi_0^+](\bx) = - 1/C_{D, \Lat}^+$. Especially, $\cS_{+}[\psi_0^+] = - (1/C_{D, \Lat}^+) \1_{\p D}$.

\item
Let $\psi_1^+ := \left( -\frac12 + \cK_+^* \right)^{-1} [\nu_d] \in H^{-1/2}_0$, where $\nu_d$ is the $d$-component of the outward normal $\nu$ to $\partial D$. Then it holds that 
	\[
		\forall \bx \in \p D,  \quad \cS_+[\psi_1^+](\bx) = x_d |_{\p D} - M_1 \quad \text{and} \quad \forall \bx \in D, \quad \widetilde{\cS_+}[\psi_1^+](\bx) = x_d - M_1,
	\]
	In particular, $\cS_+^{-1}[x_d |_{\p D}] = \psi_1^+ - C_{D, \Lat}^+ M_1 \psi_0^+$.
\end{enumerate}
\end{lemma}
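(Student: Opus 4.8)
The plan is to realize $\widetilde{\cS_+}[\psi_0^+]$ and $\widetilde{\cS_+}[\psi_1^+]$ as solutions of interior Neumann problems on the bounded smooth domain $D$, to use uniqueness up to an additive constant for such problems to reduce each identity to the determination of a single scalar, and then to fix that scalar by pairing against $\1_{\p D}$ and invoking the self-adjointness of $\cS_+$ (Lemma~\ref{lem:SkKk}(i)) together with the definitions of $C_{D, \Lat}^+$ and $M_1$.

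For (i), set $u := \widetilde{\cS_+}[\psi_0^+]$. Since $G_+^{\bnull}(\cdot, \by)$ is harmonic away from its singularity for $\by \in \p D$, $u$ is harmonic in the open set $D$, and the jump formula of Lemma~\ref{lem:SkKk}(iii) gives $\p_\nu u|_- = (-\frac12 + \cK_+^*)[\psi_0^+]$. As $\psi_0^+ \in \Ker(\cK_+^* - \frac12)$ (Lemma~\ref{lem:SkKk}(iv)), we have $\cK_+^*[\psi_0^+] = \frac12 \psi_0^+$, hence $\p_\nu u|_- = 0$ on $\p D$. A harmonic function on $D$ with vanishing Neumann trace is constant, so $u \equiv c$ on $D$ for some $c \in \R$; by continuity of the single-layer potential across $\p D$, $\cS_+[\psi_0^+] = c\, \1_{\p D}$. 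Applying the bounded inverse $\cS_+^{-1}$ (Lemma~\ref{lem:SkKk}(i)), pairing with $\1_{\p D}$, and using $\bra \1_{\p D}, \psi_0^+ \ket = 1$, $\cS_+^* = \cS_+$, and $C_{D, \Lat}^+ = -\bra \cS_+^{-1} \1_{\p D}, \1_{\p D} \ket$, we obtain $1 = c\, \bra \1_{\p D}, \cS_+^{-1} \1_{\p D} \ket = -c\, C_{D, \Lat}^+$, so $c = -1/C_{D, \Lat}^+$. This proves the interior identity, and its boundary trace is the asserted formula for $\cS_+[\psi_0^+]$; in particular $\cS_+^{-1} \1_{\p D} = -C_{D, \Lat}^+ \psi_0^+$, which is used below.

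For (ii), note first that $\nu_d \in H^{-1/2}_0$, since $\int_{\p D} \nu_d \, \rd \sigma = \int_{\p D} (\nabla x_d) \cdot \nu \, \rd \sigma = \int_D \Delta x_d \, \rd \bx = 0$; hence $\psi_1^+ = (-\frac12 + \cK_+^*)^{-1}[\nu_d]$ is well defined in $H^{-1/2}_0$ by Lemma~\ref{lem:SkKk}(v). Put $v := \widetilde{\cS_+}[\psi_1^+]$, which is harmonic in $D$ with $\p_\nu v|_- = (-\frac12 + \cK_+^*)[\psi_1^+] = \nu_d$ by the jump formula. Since $\bx \mapsto x_d$ is harmonic in $D$ with the same Neumann trace $\nu \cdot \nabla x_d = \nu_d$, the difference $v - x_d$ is harmonic with vanishing Neumann trace, hence constant, say $v = x_d - M_1'$ on $D$; by continuity, $\cS_+[\psi_1^+] = x_d|_{\p D} - M_1' \1_{\p D}$. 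Applying $\cS_+^{-1}$, pairing with $\1_{\p D}$, and using $\bra \1_{\p D}, \psi_1^+ \ket = 0$, self-adjointness, $\cS_+^{-1} \1_{\p D} = -C_{D, \Lat}^+ \psi_0^+$ from (i), and $M_1 = \bra \psi_0^+, x_d|_{\p D} \ket$, we find $0 = \bra \cS_+^{-1} \1_{\p D}, x_d|_{\p D} \ket + M_1' C_{D, \Lat}^+ = C_{D, \Lat}^+ (M_1' - M_1)$, so $M_1' = M_1$ (recall $C_{D, \Lat}^+ \neq 0$). Finally, rearranging $x_d|_{\p D} = \cS_+[\psi_1^+] + M_1 \1_{\p D}$ and applying $\cS_+^{-1}$ with $\cS_+^{-1} \1_{\p D} = -C_{D, \Lat}^+ \psi_0^+$ yields $\cS_+^{-1}[x_d|_{\p D}] = \psi_1^+ - C_{D, \Lat}^+ M_1 \psi_0^+$.

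I do not expect a genuine obstacle here: this is a routine layer-potential computation. The only points needing care are keeping the sign conventions consistent (in the jump relation of Lemma~\ref{lem:SkKk}(iii) and in the definition of $C_{D, \Lat}^+$) and checking that $\widetilde{\cS_+}[\psi]$ is harmonic inside $D$ and extends continuously to $\p D$ with trace $\cS_+[\psi]$ — which holds because, for $\bx$ near $\overline{D}$ and $\by \in \p D$, the reflected term $G_\sharp^{\bnull}(\bar x - \bar y, x_d + y_d)$ appearing in $G_+^{\bnull}$ is smooth (as $x_d + y_d > 0$), so $G_+^{\bnull}$ carries the same local singularity as the free-space fundamental solution and the classical jump formulae of Lemma~\ref{lem:SkKk}(iii) apply verbatim.
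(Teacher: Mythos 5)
Your proof is correct and follows essentially the same route as the paper: harmonicity of the single-layer potential inside $D$, the jump formula $\partial_\nu \widetilde{\cS_+}|_- = -\tfrac12 + \cK_+^*$, uniqueness of the interior Neumann problem up to constants, and then fixing the constant by pairing with $\1_{\p D}$ using $\cS_+^* = \cS_+$, $\cS_+^{-1}\1_{\p D} = -C_{D,\Lat}^+\psi_0^+$ and the definitions of $C_{D,\Lat}^+$ and $M_1$. The only difference is that you spell out part (i) (which the paper calls straightforward) and verify $\nu_d \in H^{-1/2}_0$; both are consistent with the paper's argument.
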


\begin{proof}
	The proof of \textit{(i)} is straightforward. Let us prove \textit{(ii)}. For all $M \in \R$, the function $f(\bx) := x_d - M$ satisfies $\Delta f = 0$ and $\partial_\nu f |_{-} = \nu_d$. Together with the jump formulae, we deduce that
	\[
		\forall \bx \in D, \quad \widetilde{\cS_{+}}[\psi_1^+](\bx) = x_d - M,
	\]
	where $M$ is chosen so that $\psi_1^+ \in L^2_0$. To calculate $M$, we notice that
	\[
		0 = \bra \1_{\p D}, \psi_1^+ \ket = \bra \1_{\p D}, \cS_+^{-1}[x_d - M\1_{\p D}] \ket = \bra  \cS_+^{-1}[\1],x_d - M \1_{\p D}\ket = - C_{D, \Lat}^+ \bra \psi_0^+, x_d - M \1_{\p D} \ket.
	\]
	The result follows.
\end{proof}

\begin{definition} \label{def:periodicCapacity}
	We call the constant $C_{D, \Lat}^+$ the periodic capacity of $D$ with respect to the lattice $\Lat$. 
\end{definition}


\begin{remark}
The periodic capacity $C_{D, \Lat}^+$ is positive. 
Both $C_{D, \Lat}^+$ and $M_1$ depend on the lattice $\Lat$.
\end{remark}

\subsection{Equivalent formulation}

We now rescale the problem \eqref{eq:scattering}. Recall that $\delta = \delta_\varepsilon = \mu \varepsilon^2$. In the sequel, we denote by $\bX$ the macroscopic variable and by $\bx := \frac{\bX}{\varepsilon}$ the microscopic one. We denote by $u(\bx) := {U}(\varepsilon \bx)$. With this change of variable,~\eqref{eq:scattering} is equivalent to
\begin{equation} \label{eq:scatteringPb2}
	\left\{ \begin{aligned}
		& \left( \Delta + \varepsilon^2 k^2  \right) u^\varepsilon = 0 \quad \text{on} \quad \R^d-+ \setminus \overline{\Omega^1}, \\
		& \left( \Delta + \varepsilon^2 k_b^2  \right) u^\varepsilon = 0 \quad \text{on} \quad \Omega^1,\\
		& u^\varepsilon |+ = u^\varepsilon |_- \quad \text{on} \quad \p \Omega^1 ,\\
		& \partial_\nu u^\varepsilon |_- = \mu \varepsilon^2  \partial_\nu u^\varepsilon|_+  \quad \text{on} \quad \p \Omega^1 ,\\
		& u^\varepsilon - U^\rin_{\varepsilon \bk} \quad \text{satisfies the outgoing radiation condition},\\
		& u^\varepsilon = 0 \quad \text{on} \quad \p \R^2_+ .
		\end{aligned}
	\right.
\end{equation}

We use layer potentials to solve~\eqref{eq:scatteringPb2}. We consider the case when the incidence angle is such that $\bar{k}^2 \le k_b^2$. The case $\bar{k}^2 \ge k_b^2$ can be treated in a similar manner. 
We set $k_{b,d} = \sqrt{k_b^2 - \bar{k}^2}$, and denote by $\bk_b = (\bar{k}, k_{b,d}) \in \R^d$ the vector such that $| \bk_b | = k_b$. The solution to \eqref{eq:scatteringPb2} can be represented by
\begin{equation} \label{eq:ansatz}
	u^\varepsilon (\bx) = \left\{ \begin{aligned}
		& {U_{\varepsilon \bk}}^\rin(\bar{x}, x_d) - {U^\rin_{\varepsilon \bk}}(\bar{x}, -x_d) + \widetilde{\cS_{+}^{\varepsilon \bk}}[\psi](\bx) \quad \text{for} \quad \bx \in \R^d \setminus \overline{\Omega^1} , \\
		& \widetilde{\cS_{+}^{\varepsilon \bk_b}}[\psi_b](\bx) \quad \text{for} \quad \bx \in \Omega^1,
		\end{aligned}
		\right.
\end{equation}
where $\psi_b, \psi \in H^{-1/2}$ are surface potentials. In the sequel, we denote by $\cH^{--} := H^{-1/2} \times H^{-1/2}$ and by $\cH^{+-} := H^{1/2} \times H^{-1/2}$. After some straightforward calculations and using Lemma~\ref{lem:SkKk}, we see that~\eqref{eq:scatteringPb2} is equivalent to finding $\Psi(\varepsilon) = (\psi_b, \psi)^t \in \cH^{--}$ such that
\begin{equation} \label{eq:APsi=F}
	\cA(\varepsilon) \Psi(\varepsilon) = F(\varepsilon),
\end{equation}
where
\begin{equation} \label{eq:def:Aepsilon}
		\cA(\varepsilon) := \begin{pmatrix}
			\cS_{+}^{\varepsilon \bk_b} & - \cS_{+}^{\varepsilon \bk}\\
			-\frac12 + \cK_{+}^{\varepsilon \bk_b, *} & - \mu \varepsilon^2 \left( \frac12 + \cK_{+}^{\varepsilon \bk, *} \right)
		\end{pmatrix}
\end{equation}
 and
\begin{align} \label{eq:def:Fepsilon}
	F(\varepsilon) & := \begin{pmatrix}
		 U_{\varepsilon \bk}^\rin(\bar{x}, x_d) |_+ - {U^\rin_{\varepsilon \bk}}(\bar{x}, -x_d) |_+ \\
		\delta_\varepsilon \partial_\nu \left(  U_{\varepsilon \bk}^\rin(\bar{x}, x_d) - {U^\rin_{\varepsilon \bk}}(\bar{x}, -x_d) \right) |_+ \\
	\end{pmatrix} \nonumber \\
	& = 
	- 2 \ri u_0  \begin{pmatrix} 
		 \sin(\varepsilon k_d x_d)\\
		 \varepsilon^3 \mu \left( k_d \nu_d \cos(\varepsilon k_d x_d) - \ri \bar{k} \cdot \bar{\nu} \sin(\varepsilon k_d \nu_d) \right)
	\end{pmatrix}  \re^{- \ri \varepsilon \bar{k} \cdot \bar{x}}.
\end{align}


By Lemma \ref{lem:SkKk}, $\cA(\varepsilon)$ is a bounded operator from $\cH^{--}$ to $\cH^{+-}$.  We study~\eqref{eq:APsi=F} using Taylor expansion. Following the decomposition~\eqref{eq:expansionGk+}, we write
\begin{equation} \label{eq:decompositionSkKk}
	\cS_{+}^{\varepsilon \bk} = \cS_{+} + \sum_{n=1}^\infty \varepsilon^n \cS_{n,+}^\bk
	\quad \text{and} \quad
	\cK_{+}^{\varepsilon \bk,*} = \cK_{+}^* + \sum_{n=1}^\infty \varepsilon^n \cK_{n,+}^{\bk,*},
\end{equation}
where the convergence holds in $\cB(H^{-1/2}, H^{1/2})$ and $\cB(H^{-1/2})$ respectively, and where, for $n \in \N^*$, $\psi \in H^{-1/2}$ and $\bx \in \p D$,
\[
	\cS_{n,+}^\bk[\psi](\bx) := \int_{\p D} G_{n, +}^\bk(\bx , \by) \psi(\by) \rd \sigma(\by) 
	\quad \text{and} \quad
	\cK_{n,+}^*[\psi](\bx) := \int_{\p D} \dfrac{\partial G_{n, +}^\bk}{\partial \nu_\bx}(\bx , \by) \psi(\by) \rd \sigma(\by) .
\]
Here, $\cB(H^{-1/2}, H^{1/2})$ denotes the set of linear bounded operators from $H^{-1/2}$ onto $H^{1/2}$ and 
$\cB(H^{-1/2})$ is the set of linear bounded operators on $H^{-1/2}$. 
Then we  write
\[
	\cA(\varepsilon) = \cA_0 + \cB(\varepsilon) \quad \text{with} \quad \cB(\varepsilon) := \sum_{n=1}^\infty \varepsilon^n \cA_n,
\]
where 
\[
	\cA_0 := \begin{pmatrix}
			\cS_{+} & - \cS_{+} \\
			-\frac12 + \cK_{+}^{*} & 0
		\end{pmatrix},
	\quad
	\cA_1 := \begin{pmatrix}
			 \cS_{1, +}^{\bk_b} & - \cS_{1, +}^\bk \\
			 \cK_{1, +}^{\bk_b, *} & 0
		\end{pmatrix},
	\quad 
	\cA_2 := \begin{pmatrix}
		 	\cS_{2, +}^{\bk_b} & -  \cS_{2, +}^\bk \\
			 \cK_{2, +}^{\bk_b, *} & -\mu \left( \frac12 + \cK_+^* \right)
		\end{pmatrix},
\]
and, for $n \ge 3$,
\[
	\cA_n := \begin{pmatrix}
		\cS_{n,+}^{\bk_b} & -  \cS_{n,+}^\bk \\
		\cK_{n,+}^{\bk_b,*} & - \mu \cK_{n-2, +}^{\bk,*}
	\end{pmatrix}.
\]
It is standard to check that the convergence holds in $\cB(\cH^{--}, \cH^{+-})$.
We would like to approximate $\cA(\varepsilon)^{-1}$ by $\cA_0^{-1}$. Unfortunately, this is not possible, since the operator $\cA_0$ is not invertible. It is indeed easy to check that $\Ker ( \cA_0 ) = \Span \left\{ \begin{pmatrix} \psi_0^+ \\ \psi_0^+ \end{pmatrix} \right\}$.
In order to handle this difficulty, we perturb the operator $\cA_0$ (see also the method used in~\cite{Ammari2016_Minnaert}). We introduce a rank-1 projection operator $\Pi \in \cB( H^{-1/2})$ defined by
\[
	\forall \psi \in H^{-1/2}, \quad \Pi[\psi] = \left\bra \1_{\p D}, \psi \right\ket \psi_0^+,
\]
and we set
\[
	\widetilde{\cA_0} := \cA_0 + \cP \quad \text{and} \quad \widetilde{\cA}(\varepsilon) := \cA(\varepsilon) + \cP,
	 \quad \text{with} \quad
	 \cP := \begin{pmatrix}  0 & 0 \\ \Pi & 0 \end{pmatrix}.
\]
\begin{lemma}  \label{lem:widetildeA0}
The operator $\widetilde{\cA_0}$ is bounded and is invertible with inverse
\[
	\widetilde{\cA_0}^{-1} = \begin{pmatrix} 0 & (-\frac12 + \cK_+^* + \Pi)^{-1} \\  -\cS_+^{-1} & (-\frac12 + \cK_+^* + \Pi)^{-1} \end{pmatrix}.
\]
\end{lemma}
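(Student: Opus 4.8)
The plan is to verify directly that the block operator displayed in the statement is a two-sided inverse of $\widetilde{\cA_0} = \cA_0 + \cP$. First I would record that, by Lemma~\ref{lem:SkKk}(v), the operator $-\tfrac12 + \cK_+^*$ is invertible on $H^{-1/2}_0$, and that the rank-one perturbation $-\tfrac12 + \cK_+^* + \Pi$ is invertible on the full space $H^{-1/2}$: indeed $\Pi$ is the spectral projection onto $\Ker(\cK_+^* - \tfrac12) = \Span\{\psi_0^+\}$ along $H^{-1/2}_0$ (this uses $\bra \1_{\p D}, \psi_0^+\ket = 1$ so that $\Pi^2 = \Pi$, and that the kernel of $\cK_+^* - \tfrac12$ and its "cokernel" $H^{-1/2}_0$ are complementary, which follows from $\sigma(\cK_+^*) \subset (-\tfrac12,\tfrac12]$ and the simplicity of the eigenvalue $\tfrac12$ in Lemma~\ref{lem:SkKk}(iv)). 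On $\Span\{\psi_0^+\}$ the operator $-\tfrac12 + \cK_+^* + \Pi$ acts as $0 + \mathrm{Id}$, i.e. as the identity, and on $H^{-1/2}_0$ it acts as $-\tfrac12 + \cK_+^*$ which is boundedly invertible; hence $(-\tfrac12 + \cK_+^* + \Pi)^{-1}$ exists and is bounded. Boundedness of $\widetilde{\cA_0}$ itself is immediate since $\cA_0$ is bounded (Lemma~\ref{lem:SkKk}) and $\Pi$, hence $\cP$, is bounded.

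Next I would simply compute the two block products. Writing $T := -\tfrac12 + \cK_+^* + \Pi$ and using
\[
	\widetilde{\cA_0} = \begin{pmatrix} \cS_+ & -\cS_+ \\ T & 0 \end{pmatrix},
	\qquad
	B := \begin{pmatrix} 0 & T^{-1} \\ -\cS_+^{-1} & T^{-1} \end{pmatrix},
\]
one finds
\[
	\widetilde{\cA_0}\, B = \begin{pmatrix} \cS_+ \cdot 0 + (-\cS_+)(-\cS_+^{-1}) & \cS_+ T^{-1} + (-\cS_+)T^{-1} \\ T\cdot 0 + 0 & T T^{-1} + 0 \end{pmatrix} = \begin{pmatrix} \mathrm{Id} & 0 \\ 0 & \mathrm{Id} \end{pmatrix},
\]
and similarly
\[
	B\, \widetilde{\cA_0} = \begin{pmatrix} 0\cdot \cS_+ + T^{-1} T & 0 \\ -\cS_+^{-1}\cS_+ + T^{-1}T & \cS_+^{-1}\cS_+ + 0 \end{pmatrix} = \begin{pmatrix} \mathrm{Id} & 0 \\ 0 & \mathrm{Id} \end{pmatrix}.
\]
Here I use that $\cS_+ : H^{-1/2} \to H^{1/2}$ is invertible (Lemma~\ref{lem:SkKk}(i)) so the compositions $\cS_+\cS_+^{-1}$, $\cS_+^{-1}\cS_+$ make sense and equal the identity on the appropriate spaces, and that all entries of $B$ map into the spaces on which the corresponding entries of $\widetilde{\cA_0}$ act.

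Since $\widetilde{\cA_0}B = \mathrm{Id}_{\cH^{+-}}$ and $B\widetilde{\cA_0} = \mathrm{Id}_{\cH^{--}}$ with $B$ bounded, $\widetilde{\cA_0}$ is invertible with $\widetilde{\cA_0}^{-1} = B$, which is the claimed formula. The only genuinely non-mechanical point is the invertibility of $T = -\tfrac12 + \cK_+^* + \Pi$ on all of $H^{-1/2}$ from the partial invertibility statement in Lemma~\ref{lem:SkKk}(v); I expect this to be the main obstacle, and it is handled as above by exhibiting the direct-sum decomposition $H^{-1/2} = \Span\{\psi_0^+\} \oplus H^{-1/2}_0$ into which $\Pi$ projects, observing that $T$ is block-diagonal with respect to it, equal to the identity on the first summand and to the invertible operator $-\tfrac12 + \cK_+^*$ on the second. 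The matrix-product verification is then routine.
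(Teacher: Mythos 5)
Your proposal is correct, and since the paper states Lemma~\ref{lem:widetildeA0} without proof, your direct verification is exactly the intended argument: the invertibility of $-\tfrac12 + \cK_+^* + \Pi$ via the splitting $H^{-1/2} = \Span\{\psi_0^+\} \oplus H^{-1/2}_0$ (identity on the first summand, the boundedly invertible $-\tfrac12+\cK_+^*$ on the second, as in Lemma~\ref{lem:SkKk}(iv)--(v)), followed by the routine two-sided block multiplication. The only point you leave implicit is that $-\tfrac12+\cK_+^*$ indeed maps $H^{-1/2}_0$ into itself, which follows by duality from $\cK_+[\1_{\p D}]=\tfrac12 \1_{\p D}$ and is already presupposed by Lemma~\ref{lem:SkKk}(v), so the block-diagonal structure you use is justified.
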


Recall that we want to calculate $\Psi$, the solution to~\eqref{eq:APsi=F}. We introduce
\[
	Q :=  \begin{pmatrix}  \Pi & 0 \\ 0 & 0 \end{pmatrix},
	\quad
	\Psi_- := (1 - Q) \Psi
	\quad \text{and} \quad
	\Psi_+ := Q \Psi
	\quad \text{so that} \quad
	\Psi = \Psi_- + \Psi_+.
\]
For the sake of clarity, we introduce the vectors $\Phi_1, \Phi_2 \in \cH^{--}$ and $\Phi_3, \Phi_4 \in \cH^{+-}$ defined by
\[
	\Phi_{1} := \begin{pmatrix} \psi_0^+ \\ 0 \end{pmatrix}, 
	\quad
	\Phi_{2} := \begin{pmatrix} 0 \\ \psi_0^+ \end{pmatrix},
	\quad
	\Phi_{3} := \begin{pmatrix} \1_{\p D} \\ 0 \end{pmatrix}, 
	\quad \text{and} \quad
	\Phi_{4} := \begin{pmatrix} 0 \\ \1_{\p D} \end{pmatrix}.
\]
Note that $\Psi_+ = \alpha \Phi_1$ for some $\alpha \in \C$. The equation~\eqref{eq:APsi=F} is therefore equivalent to
\begin{equation} \label{eq:equiv1}
	\left( \widetilde{\cA_0} + \cB(\varepsilon) - \cP \right) \left( \alpha \Phi_1 + \Psi_- \right) = F(\varepsilon).
\end{equation}
From Lemma~\ref{lem:widetildeA0}, we deduce that for $\varepsilon$ small enough, the operator $\widetilde{\cA_0} + \cB(\varepsilon)$ is invertible, and that its inverse is given by the Neumann series
\begin{equation} \label{eq:Neumann}
	\widetilde{\cA}(\varepsilon)^{-1} = \left( \widetilde{\cA_0} + \cB(\varepsilon) \right)^{-1} = \widetilde{\cA_0}^{-1} + \sum_{n=1}^\infty (-1)^n \left( \widetilde{\cA_0}^{-1} \cB(\varepsilon) \right)^n \widetilde{\cA_0}^{-1},
\end{equation}
where the convergence holds in $\cB(\cH^{+-}, \cH^{--})$. Applying $\left( \widetilde{\cA_0} + \cB(\varepsilon) \right)^{-1}$ to both sides of~\eqref{eq:equiv1}, and using the fact that $\cP \Phi_1 = \Phi_2$ and that $\cP \Psi_- = 0$, we obtain
\[
	\alpha \Phi_1 + \Psi_- - \alpha \left( \widetilde{\cA_0} + \cB(\varepsilon) \right)^{-1} \Phi_2 = \left( \widetilde{\cA_0} + \cB(\varepsilon) \right)^{-1} F(\varepsilon).
\]
Finally, we notice that $\bra \Phi_3, \Psi_- \ket = 0$, so that, by taking the duality product with $\Phi_3$, we obtain that~\eqref{eq:APsi=F} is equivalent to
\begin{equation} \label{eq:system}
	\left\{ \begin{aligned}
		& \alpha = \dfrac{\left\bra \Phi_3, \left( \widetilde{\cA_0} + \cB(\varepsilon) \right)^{-1} F(\varepsilon) \right\ket}{1 - \left\bra \Phi_3, \left( \widetilde{\cA_0} + \cB(\varepsilon) \right)^{-1} \Phi_2 \right\ket}, \\
		& \Psi_- = \left( \widetilde{\cA_0} + \cB(\varepsilon) \right)^{-1} F(\varepsilon) - \alpha \Phi_1 + \alpha \left( \widetilde{\cA_0} + \cB(\varepsilon) \right)^{-1} \Phi_2.
	\end{aligned} \right.
\end{equation}


\subsection{Asymptotic expansions}

We now solve~\eqref{eq:system} using asymptotic expansions in $\varepsilon$. We first need some estimates.
\begin{lemma} \label{lem:estimates}
	The following identities hold.
	\begin{align*}
	& (i) \ \cK_{1,+}^\bk [\1_{\p D}] = 0, \quad \\
	& (ii) \ \cK_{2,+}^\bk [\1_{\p D}](\bx) = - k^2 \int_{D} G_{+}^{\bnull}(\bx, \by) \rd \by, \quad \\
	& (iii) \ \cK_{3,+}^\bk [\1_{\p D}](\bx) = - k^2 \int_{D} G_{1,+}^\bk(\bx, \by) \rd \by.
	\end{align*}
\end{lemma}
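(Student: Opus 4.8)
The plan is to prove the three identities of Lemma~\ref{lem:estimates} by exploiting the defining PDEs for the expansion coefficients $G_{n,+}^\bk$ collected in~\eqref{eq:magicRelations+}, together with the jump relations for the periodic-Dirichlet layer potentials from Lemma~\ref{lem:SkKk}(iii). The key observation is that $\cK_{n,+}^{\bk,*}[\1_{\p D}]$ is, up to the jump, the exterior normal derivative on $\p D$ of the potential $\bx \mapsto \int_{\p D} G_{n,+}^\bk(\bx,\by)\,\rd\sigma(\by)$; and since $\1_{\p D} = \partial_\nu(\text{const})|_- $ in a suitable sense, one wants to replace the surface integral by a volume integral over $D$ using the divergence theorem.

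First I would treat \emph{(i)}. Since $\Delta_\by G_{1,+}^\bk(\bx,\by) = 0$ for $\bx,\by \in \overline D$ by~\eqref{eq:magicRelations+}, the function $v_1(\bx) := \int_{\p D} G_{1,+}^\bk(\bx,\by)\,\rd\sigma(\by)$ — note the normal is being integrated against the constant density, so I would rather write this as testing $G_{1,+}^\bk$ against the single-layer density $\1_{\p D}$ — is harmonic inside $D$ (one checks $\Delta_\bx G_{1,+}^\bk = 0$ on $\overline D$ as well). Actually the cleanest route: $\cK_{1,+}^{\bk,*}[\1_{\p D}]$ is adjoint to $\cK_{1,+}^\bk[\,\cdot\,]$, and I want $\cK_{1,+}^\bk[\1_{\p D}] = (\partial_\nu \widetilde{\cS_{1,+}^\bk}[\1_{\p D}])|_+ - \tfrac12 \cdot 0$-type identity; but simplest is to use that the harmonic extension of $\widetilde{\cS_{1,+}^\bk}[\1_{\p D}]$ into $D$ is harmonic with the stated boundary data and by the symmetry relation~\eqref{eq:symg1+} and the explicit formula~\eqref{eq:def:G1+} one reads off that $\widetilde{\cS_{1,+}^\bk}[\1_{\p D}]$ has vanishing interior normal trace, hence by the jump formula $(-\tfrac12 + \cK_{1,+}^{\bk,*})[\1_{\p D}] + (\tfrac12 + \cK_{1,+}^{\bk,*})[\1_{\p D}]$-bookkeeping gives $\cK_{1,+}^\bk[\1_{\p D}] = 0$. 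The point for \emph{(i)} is that $G_{1,+}^\bk$ restricted to $\overline D \times \overline D$ is, by~\eqref{eq:def:G1+}, a harmonic function of $\bx$ that is moreover affine-type/odd in the right variables, whose normal derivative integrates to zero.

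For \emph{(ii)} and \emph{(iii)} I would use the second relation in~\eqref{eq:magicRelations+}, namely $\Delta_\by G_{n+2,+}^\bk(\bx,\by) = -k^2 G_{n,+}^\bk(\bx,\by)$ on $\overline D \times \overline D$. Fix $\bx \in \p D$ and apply Green's identity on $D$ to the pair $\big(G_{n+2,+}^\bk(\bx,\cdot),\,1\big)$: the boundary term $\int_{\p D} \partial_{\nu_\by} G_{n+2,+}^\bk(\bx,\by)\,\rd\sigma(\by)$ equals $\int_D \Delta_\by G_{n+2,+}^\bk(\bx,\by)\,\rd\by = -k^2 \int_D G_{n,+}^\bk(\bx,\by)\,\rd\by$. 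Then I must relate $\int_{\p D}\partial_{\nu_\by} G_{n+2,+}^\bk(\bx,\by)\,\rd\sigma(\by)$ to $\cK_{n+2,+}^{\bk}[\1_{\p D}](\bx)$ (or its adjoint acting on $\1$): up to the $\pm\tfrac12$ jump and using that the interior/exterior traces of the relevant harmonic potential differ by the density, this surface integral \emph{is} $\cK_{n+2,+}^{\bk}[\1_{\p D}](\bx)$ since $\1_{\p D}$ generates no jump in the conormal derivative when paired appropriately — more precisely $\cK_{n+2,+}^{\bk}[\1_{\p D}] = \int_{\p D}\partial_{\nu_\bx}G_{n+2,+}^\bk(\bx,\by)\rd\sigma(\by)$ and one swaps to $\partial_{\nu_\by}$ using the symmetry $G_{n,+}^\bk(\bx,\by) = G_{n,+}^\bk(\by,\bx)$ valid for the relevant low-order terms (for $n=0$ this is clear from~\eqref{eq:def:Gk+} with $\bk=\bnull$, and for the $\bk$-dependence entering $G_{1,+}$ one uses~\eqref{eq:symg1+}). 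Taking $n=0$ gives \emph{(ii)} with $G_+^\bnull$ (using $G_{0,+}^\bk = G_+^{\bk=\bnull}$, already noted in the text), and $n=1$ gives \emph{(iii)} with $G_{1,+}^\bk$.

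The main obstacle I anticipate is the careful bookkeeping of which trace (interior vs.\ exterior, $\partial_{\nu_\bx}$ vs.\ $\partial_{\nu_\by}$) appears and of the $\pm\tfrac12$ jump terms, so that the surface integral of the conormal derivative of $G_{n,+}^\bk$ genuinely coincides with $\cK_{n,+}^{\bk}[\1_{\p D}]$ rather than differing by a stray multiple of $\1_{\p D}$; this is where one must invoke, for $n=1$, that $G_{1,+}^\bk(\bx,\cdot)$ is \emph{harmonic} inside $D$ (no source), so that there is no jump contribution and the naive Green-identity computation is exact. A secondary point is justifying the term-by-term differentiation under the integral sign in $\by$, which is legitimate since on $\overline D \times \overline D$ the kernels $G_{n,+}^\bk$ are smooth (the singular diagonal contributions from $G_\sharp^\bk((\bar x-\bar y, x_d-y_d))$ are cancelled by the reflection term only off-diagonal, but here $\bx\in\p D\subset\R^d_+$ and $\by\in D\subset\R^d_+$, and the relevant identities are used for the \emph{regular} parts $G_{n,+}^\bk$ whose defining relations~\eqref{eq:magicRelations+} already hold pointwise on $\overline D\times\overline D$).
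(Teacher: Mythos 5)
Your core mechanism is exactly the paper's: for the expansion kernels the Dirac source survives only at order zero, so on $\overline D\times\overline D$ one has \eqref{eq:magicRelations+}, and the whole lemma is the divergence theorem applied to the constant density, namely $\cK_{n,+}^{\bk}[\1_{\p D}](\bx)=\int_{\p D}\p_{\nu_\by}G_{n,+}^{\bk}(\bx,\by)\,\rd\sigma(\by)=\int_D\Delta_\by G_{n,+}^{\bk}(\bx,\by)\,\rd\by$, which equals $0$ for $n=1$, $-k^2\int_D G_{+}^{\bnull}(\bx,\by)\,\rd\by$ for $n=2$, and $-k^2\int_D G_{1,+}^{\bk}(\bx,\by)\,\rd\by$ for $n=3$. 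Your Green's-identity computation in (ii)--(iii) is precisely this three-line argument.

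Two steps as you wrote them would not survive scrutiny, though both are removable. First, the swap from $\p_{\nu_\bx}$ to $\p_{\nu_\by}$ via ``the symmetry $G_{n,+}^\bk(\bx,\by)=G_{n,+}^\bk(\by,\bx)$'' is not available: only $G_{0,+}=G_{+}^{\bnull}$ is symmetric, whereas $G_{1,+}^{\bk}$ has a nonvanishing antisymmetric part — the term $\bar{k}\cdot(\bar{x}-\bar{y})\left(|x_d-y_d|-|x_d+y_d|\right)$ in \eqref{eq:def:G1+} together with $g_{1,+}$, which by \eqref{eq:symg1+} is \emph{anti}symmetric (the paper later exploits exactly this antisymmetry when computing the constant $I$); and the kernels your swap would actually act on are $G_{2,+}^{\bk}$ and $G_{3,+}^{\bk}$, for which no symmetry is established either. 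The swap is simply unnecessary: $\cK_{n,+}^{\bk}$, as opposed to $\cK_{n,+}^{\bk,*}$ (whose kernel is $\p_{\nu_\bx}G_{n,+}^{\bk}(\bx,\by)$), is the double-layer-type companion whose kernel is differentiated in the integration variable, so the surface integral you obtain from Green's identity \emph{is} $\cK_{n,+}^{\bk}[\1_{\p D}](\bx)$ with no transposition needed. Second, the jump-formula bookkeeping in your treatment of (i) (interior normal traces of $\widetilde{\cS_{1,+}^{\bk}}[\1_{\p D}]$, $\pm\tfrac12$ terms) is beside the point: since $\Delta_\by G_{1,+}^{\bk}=0$ on $\overline D\times\overline D$ there is no singular layer and hence no jumps to track, and $\int_{\p D}\p_{\nu_\by}G_{1,+}^{\bk}(\bx,\by)\,\rd\sigma(\by)=\int_D\Delta_\by G_{1,+}^{\bk}(\bx,\by)\,\rd\by=0$ immediately, as you in fact observe at the end. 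With these repairs your argument coincides with the paper's proof.
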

\begin{proof}
First, we recall that $G_{1,+}^\bk (\bx, \by)$ satisfies~\eqref{eq:magicRelations+}. In particular,
\[
	(i) \quad
	\cK_{1, +}^\bk [\1_{\p D}](\bx)
		=   \int_{\p D} \partial_{\nu_\by} G_{1, +}^\bk(\bx, \by) \rd \sigma(\by)
		= \int_D \Delta_{\bx} G_{1, +}^\bk(\bx, \by) \rd \bx = 0.
\]
Also, using~\eqref{eq:magicRelations+}, we obtain
\begin{align*}
	(ii) \quad \cK_{2,+}^\bk [\1_{\p D}](\bx) & = \int_{\p D} \p_{\nu_{\by}} G_{2,+}^\bk(\bx, \by)  \rd \sigma(\by)
		= \int_D \Delta_{\by} G_{2,+}^\bk (\bx, \by) \rd \by
		= - k^2 \int_D G_{+}^\bnull (\bx, \by) \rd \by,
\end{align*}
and similarly,
\begin{align*}
	(iii) \quad \cK_{3,+}^\bk [\1_{\p D}](\bx) & = \int_{\p D} \p_{\nu_{\by}} G_{3,+}^\bk(\bx, \by)  \rd \sigma(\by)
		=- k^2 \int_D G_{1,+}^\bk(\bx, \by) \rd \by.
\end{align*}

\end{proof}
We start with the calculation of $\alpha$.

\medskip

\noindent \textbf{$\bullet$ Evaluation of $\left\bra \Phi_3, \left( \widetilde{\cA_0} + \cB(\varepsilon) \right)^{-1} \Phi_2 \right\ket$.}
From~\eqref{eq:Neumann}, we have
\begin{equation} \label{eq:asympAm1}
\begin{aligned}
	\left( \widetilde{\cA_0} + \cB(\varepsilon) \right)^{-1} 
		& = \widetilde{\cA_0}^{-1} - \varepsilon \left( \widetilde{\cA_0}^{-1} \cA_1 \widetilde{\cA_0}^{-1} \right)
		+ \varepsilon^2 \left( \left[\widetilde{\cA_0}^{-1} \cA_1 \right]^2 \widetilde{\cA_0}^{-1} - \widetilde{\cA_0}^{-1} \cA_2 \widetilde{\cA_0}^{-1}  \right)\\
		& - \varepsilon^3 \left( \left[\widetilde{\cA_0}^{-1} \cA_1 \right]^3 \widetilde{\cA_0}^{-1} + \widetilde{\cA_0}^{-1} \cA_3 \widetilde{\cA_0}^{-1} 
		\right) \\
		& + \varepsilon^3 \left( \widetilde{\cA_0}^{-1}\cA_1 \widetilde{\cA_0}^{-1} \cA_2 \widetilde{\cA_0}^{-1} + \widetilde{\cA_0}^{-1} \cA_2 \widetilde{\cA_0}^{-1} \cA_1 \widetilde{\cA_0}^{-1} \right) + O(\varepsilon^4).
\end{aligned}
\end{equation}
It holds that $\widetilde{\cA_0}^{-1} \Phi_2 = \Phi_1 + \Phi_2$ and $\left(\widetilde{\cA_0}^{-1}\right)^* \Phi_3 = \Phi_4$. Moreover, using Lemma~\ref{lem:estimates} we have 
\[
	\left( \widetilde{\cA_0}^{-1} \cA_1 \right)^* \Phi_3 = \cA_1^* \Phi_4 
	= \begin{pmatrix} \cK_{1,+}^{\bk_b} [\1_{\p D}] \\ 0 \end{pmatrix} = 0.
\]
As a consequence,~\eqref{eq:asympAm1} simplifies into
\begin{equation} \label{eq:asympAm1_v2}
\begin{aligned}
	\left\bra \Phi_3, \left( \widetilde{\cA_0} + \cB(\varepsilon) \right)^{-1} \Phi_2 \right\ket 
	 = & 1 - \varepsilon^2 \left\bra \Phi_4, \cA_2 (\Phi_1 + \Phi_2) \right\ket - \varepsilon^3 \left\bra \Phi_4, \cA_3 (\Phi_1 + \Phi_2) \right\ket \\
	& + \varepsilon^3 \left\bra \Phi_4, \cA_2 \widetilde{\cA_0}^{-1} \cA_1 (\Phi_1 + \Phi_2) \right\ket + O(\varepsilon^4).
\end{aligned}
\end{equation}
Let us evaluate the terms appearing in~\eqref{eq:asympAm1_v2}. First, it holds that
\begin{align*}
	\left\bra \Phi_4, \cA_2 (\Phi_1 + \Phi_2) \right\ket & 
		= \left\bra \1_{\p D}, \cK_{2,+}^{\bk_b,*} [\psi_0^+] \right\ket - \mu \left\bra \1_{\p D}, \left( \frac12 + \cK_+^* \right) \psi_0^+ \right\ket.
\end{align*}
Recall that $\cK_+^* [ \psi_0^+] = \frac12 \psi_0^+$. By Lemma~\ref{lem:estimates}, we have
\[
	\left\bra \cK_{2,+}^{\bk_b} [\1_{\p D}], \psi_0^+ \right\ket = - k_b^2 \int_{\p D} \int_D G_+^\bnull(\bx, \by) \psi_0^+(\bx) \rd \by \rd \sigma(\bx) = - k_b^2 \int_D \widetilde{S_+}[\psi_0^+](\by) \rd \by = \frac{k_b^2 | D | }{C_{D, \Lat}^+}.
\]
Hence, 
\[
	\left\bra \Phi_4, \cA_2 (\Phi_1 + \Phi_2) \right\ket = \dfrac{k_b^2 | D | }{C_{D, \Lat}^+} - \mu = \mu_M - \mu , 
\]
where $\mu_M$ was defined in~\eqref{eq:def:muM}. Similarly, using Lemma~\ref{lem:estimates}, one gets
\[
	\left\bra \Phi_4, \cA_3  (\Phi_1 + \Phi_2) \right\ket =   \left\bra \1_{\p D}, \cK_{3,+}^{\bk_b,*} [\psi_0^+] \right\ket 
	= - k_b^2 C_1,
\]
where for simplicity we set
\begin{equation} \label{eq:def:C1}
	 C_1 := \int_D \int_{\p D} G_{1, +}^{\bk_b}(\bx, \by) \psi_0^+(\bx) \rd \sigma(\bx) \rd \by.
\end{equation}
Finally, it remains to evaluate $\left\bra \Phi_4, \cA_2 \widetilde{\cA_0}^{-1} \cA_1  (\Phi_1 + \Phi_2) \right\ket$. First, we have that
\[
	\cA_1 [\Phi_1 + \Phi_2] =  \begin{pmatrix} \left( \cS_{1,+}^{\bk_b} - \cS_{1,+}^\bk \right)[\psi_0^+] \\
		\cK_{1,+}^{\bk_b,*}[\psi_0^+] \end{pmatrix}.
\]
By noticing that $\bar{k_b} = \bar{k}$, together with the expression~\eqref{eq:def:G1+}, we see that the contribution of $\bar{k}$ in $\cS_{1,+}^{\bk_b} - \cS_{1,+}^\bk$ cancels. Hence,
\[
	 \left( \cS_{1,+}^{\bk_b} - \cS_{1,+}^\bk \right)[\psi_0^+] = \left( k_{b,d} - k_d \right) \int_{\p D} \dfrac{- \ri x_d y_d}{| \WS |} \psi_0^+(\by) \rd \sigma(\by) =  \dfrac{- \ri M_1}{| \WS |} \left( k_{b,d} - k_d \right) x_d.
\]
In particular,
\[
	\widetilde{\cA_0}^{-1} \cA_1 [\Phi_1 + \Phi_2] = \begin{pmatrix}
		(-\frac12 + \cK_+^* + \Pi) \cK_{1,+}^{\bk_b,*}[\psi_0^+] \\
		(-\frac12 + \cK_+^* + \Pi)^{-1} \cK_{1,+}^{\bk_b,*}[\psi_0^+] - \dfrac{- \ri M_1}{| \WS |} \left( k_{b,d} - k_d \right) \left( \psi_1^+ - M_1 C_{D, \Lat}^+ \psi_0^+ \right)
		\end{pmatrix} . 
\]
On the other hand, we have (using the fact that $\cK_+ [\1_{\p D}] = \frac12 \1_{\p D}$)
\begin{equation} \label{eq:A2starPhi4}
	\cA_2^* \Phi_4 = \begin{pmatrix} \cK_{2,+}^{\bk_b}[\1_{\p D}]  \\ -\mu  \left( \frac12 + \cK_+ \right) \1_{\p D} \end{pmatrix} = 
	\begin{pmatrix} - k_b^2 \int_D G_+^\bnull(\bx, \by) \rd \by \\ -\mu \1_{\p D} \end{pmatrix}.
\end{equation}
Altogether, we obtain that 
\[
	\left\bra \Phi_4, \cA_2 \widetilde{\cA_0}^{-1} \cA_1  (\Phi_1 + \Phi_2) \right\ket = -k_b^2 \left\bra \int_D G_+^\bnull(\bx, \by) \rd \by,  \left( -\frac12 + \cK_+^* + \Pi \right)^{-1} \cK_{1,+}^{\bk_b,*}[\psi_0^+] \right\ket + \frac{\ri \mu M_1^2 C_{D, \Lat}^+}{| \WS |} (k_{b,d} - k_d).
\]
Let us compute the inner product. We introduce the map $H : \bar{D} \to \C$ defined by
\[
	\forall \bx \in \bar{D}, \quad H(\bx) := \int_{\p D} G_{1,+}^{\bk_b}(\bx, \by) \psi_0^+(\by) \rd \sigma(\by),
\]
and we set $T = \cS_+^{-1}[H |_-]$ and $I = \int_{\p D} T \rd \sigma$. Thanks to~\eqref{eq:magicRelations+}, we see that $\Delta H = 0$ on $D$. Together with the jump relation formulae (see Lemma~\ref{lem:SkKk}), we deduce that
\[
	\left( -\frac12 + \cK_+^* \right)[T] = \dfrac{\partial H}{\partial_{\nu_\bx}} = \cK_{1,+}^{\bk_b,*}[\psi_0^+].
\]
Therefore \[
	\left( -\frac12 + \cK_+^* + \Pi \right)^{-1}  \cK_{1,+}^{\bk_b,*}[\psi_0^+] = T -I \psi_0^+.
\]
It follows that
\begin{align*}
	& \left\bra \int_D G_+^\bnull(\bx, \by) \rd \by,  \left( -\frac12 + \cK_+^* + \Pi \right)^{-1} \cK_{1,+}^{\bk_b,*}[\psi_0^+] \right\ket 
	= \int_D  \left( \int_{\p D}G_+^\bnull(\bx, \by) \left( T -I \psi_0^+ \right)(\by) \rd \sigma(\by) \right) \rd \bx \\
	& \qquad = \int_D \widetilde{\cS_+} \left[ T - I \psi_0^+ \right](\bx) \rd \bx = \int_D H(\bx) \rd \bx +I \dfrac{| D |}{C_{D, \Lat}^+} = C_1 +I \dfrac{| D |}{C_{D, \Lat}^+},
\end{align*}
where $C_1$ was defined in~\eqref{eq:def:C1}. It remains to compute the constant $I$. We have
\begin{align*}
	I & = \bra \1_{\p D}, T \ket = \bra \1_{\p D}, \cS_+^{-1} [H ] \ket =  \bra  \cS_+^{-1}[\1_{\p D}], H \ket = - C_{D, \Lat}^+ \bra \psi_0^+, H \ket \\
	&  = - C_{D, \Lat}^+ \int_{\p D} \int_{\p D} G_{1,+}^{\bk_b}(\bx, \by) \psi_0^+(\bx) \psi_0^+(\by) \rd \sigma(\bx) \rd \sigma(\by) \\
	& = \frac{- C_{D, \Lat}^+}{2}  \int_{\p D} \int_{\p D} \psi_0^+(\bx) \psi_0^+(\by) \left( G_{1,+}^{\bk_b}(\bx, \by) + G_{1,+}^{\bk_b}(\by, \bx) \right) \rd \sigma(\bx) \rd \sigma(\by),
\end{align*}
where we performed the change of variable $(\bx, \by) \to (\by, \bx)$ to obtain the last equality. From the expression of $G_{1,+}^{\bk_b}$ in~\eqref{eq:def:G1+} and the symmetry relation~\eqref{eq:symg1+}, we get
\[
	I = \dfrac{\ri C_{D, \Lat}^+ }{| \WS |} k_{b,d}  \int_{\p D}  x_d \psi_0(\bx) \rd \sigma(\bx)  \int_{\p D}  y_d \psi_0(\by) \rd \sigma(\by) = \frac{\ri M_1^2 C_{D, \Lat}^+}{| \WS |} k_{b,d}.
\]


Altogether, the above calculations yield
\begin{align*}
	1 - \left\bra \Phi_3, \left( \widetilde{\cA_0} + \cB(\varepsilon) \right)^{-1} \Phi_2 \right\ket_{\cH^{--}} 
	& = \varepsilon^2 \left(  \mu_M- \mu \right)  + \varepsilon^3 \left(-k_b^2 C_1 \right) \\
	& \quad - \varepsilon^3 \left( -k_b^2 \left[ C_1 +  \dfrac{\ri M_1^2 | D |}{| \WS |} k_{b,d}\right] + \dfrac{\ri \mu M_1^2 C_{D, \Lat}^+}{| \WS |}(k_{b,d} - k_d) \right) + O(\varepsilon^4) \\
	& =  \varepsilon^2 \left(  \mu_M- \mu \right)  + \varepsilon^3  \ri \frac{M_1^2}{| \WS |} \left(  k_b^2 k_{b,d}  | D |  - \mu (k_{b,d} - k_d) C_{D, \Lat}^+ \right) + O(\varepsilon^4) \\
	& = \varepsilon^2 (\mu_M - \mu) \left( 1 + \varepsilon \ri \frac{M_1^2 k_{b,d} C_{D, \Lat}^+}{| \WS |} \right) + \varepsilon^3 \ri \mu \frac{M_1^2 k_d C_{D, \Lat}^+ }{| \WS |} + O(\varepsilon^4).
\end{align*}


\noindent \textbf{$\bullet$ Evaluation of $\left\bra \Phi_3, \left( \widetilde{\cA_0} + \cB(\varepsilon) \right)^{-1} F(\varepsilon) \right\ket$.} 
From~\eqref{eq:def:Fepsilon}, we obtain that
\[
	F = \varepsilon F_1 + \varepsilon^2 F_2 + \varepsilon^3 F_3+ O(\varepsilon^4),
\]
with
\[
	F_1 := - 2 \ri  u_0 \begin{pmatrix} k_d x_d \\ 0\end{pmatrix},
	\quad 
	F_2 := - 2 \ri u_0 \begin{pmatrix} - \ri (\bar{k} \cdot \bar{x}) k_d x_d \\ 0 \end{pmatrix},
	\quad \text{and} \quad
	F_3 := - 2 \ri u_0  \begin{pmatrix} 
		 \frac16  (k_d x_d)^3 - \frac12 (\bar{k} \cdot \bar{x})^2 k_d x_d \\
		  \mu k_d  \nu_d
	\end{pmatrix}.
\]
Using the decomposition~\eqref{eq:asympAm1} and similar estimates as before, we get
\begin{align*}
	\left\bra \Phi_3, \left( \widetilde{\cA_0} + \cB(\varepsilon) \right)^{-1} F(\varepsilon) \right\ket
		= & \
	\varepsilon \left\bra \Phi_3, \widetilde{\cA_0}^{-1} F_1 \right\ket + \varepsilon^2 \left\bra \Phi_3, \widetilde{\cA_0}^{-1} F_2 \right\ket \\
	 & \quad - \varepsilon^3 \left\bra \Phi_3, \widetilde{\cA_0}^{-1} \cA_2 \widetilde{\cA_0}^{-1} F_1 \right\ket + \varepsilon^3 \left\bra \Phi_3, \widetilde{\cA_0}^{-1} F_3 \right\ket + O(\varepsilon^4).		
\end{align*}
We have that $\left\bra \Phi_3, \widetilde{\cA_0}^{-1} F_1 \right\ket = \left\bra \Phi_3, \widetilde{\cA_0}^{-1} F_2 \right\ket = \left\bra \Phi_3, \widetilde{\cA_0}^{-1} F_3 \right\ket  = 0$. Also, using~\eqref{eq:A2starPhi4}, we have
\[
	\left\bra \Phi_3, \widetilde{\cA_0}^{-1} \cA_2 \widetilde{\cA_0}^{-1} F_1 \right\ket = \left\bra \Phi_4, \cA_2 \widetilde{\cA_0}^{-1} F_1 \right\ket
	=  - 2 \ri u_0 k_d \mu \left\bra   \1_{\p D} ,  \psi_1^+ - M_1 C_{D, \Lat}^+ \psi_0^+ \right\ket = 2 \ri u_0 k_d \mu M_1 C_{D, \Lat}^+ .
\]
We can conclude that
\[
	\left\bra \Phi_3, \left( \widetilde{\cA_0} + \cB(\varepsilon) \right)^{-1} F(\varepsilon) \right\ket = - \varepsilon^3 2 \ri u_0 k_d \mu M_1 C_{D, \Lat}^+ + O(\varepsilon^4). 
\]

\noindent \textbf{$\bullet$ Evaluation of $\alpha$.}
From the above calculations, we deduce that 
\[
	\alpha = - \dfrac{ \varepsilon 2 \ri u_0 \mu k_d M_1 C_{D, \Lat}^+ + O(\varepsilon^2)}{(\mu_M - \mu) \left( 1 + \varepsilon \ri \frac{M_1^2 k_{b,d} C_{D, \Lat}^+}{| \WS |} \right) + \varepsilon \ri \mu \frac{M_1^2 k_d C_{D, \Lat}^+ }{| \WS |} + O(\varepsilon^2)}.
\]
In order to simplify this expression, we recall the scattering function $g_s(\mu, \varepsilon)$ defined in~\eqref{eq:def:gs}. We can check that 
\[
	\alpha = \left(2 \ri u_0 k_d  C_{D, \Lat}^+ + O(\varepsilon) \right)\cdot g_s(\mu, \varepsilon),
\]
in the sense
\[
	\exists C \in \R^+, \ \forall \varepsilon \ge 0, \ \forall \mu > 0, \quad  \left| \alpha -  (2 \ri u_0 k_d)  C_{D, \Lat}^+ g_s(\mu, \varepsilon)  \right| \le C \varepsilon | g_s(\mu, \varepsilon) |.
\]

\noindent \textbf{$\bullet$ Evaluation of $\Psi_-$.} We finally calculate $\Psi_-$ defined in the second equation of~\eqref{eq:system}. Using similar calculations as before, we get
\begin{align}
	\Psi_- & = \varepsilon \widetilde{\cA_0}^{-1} F_1 - \alpha \Phi_1 + \alpha \widetilde{\cA_0}^{-1} \Phi_2 + O(\varepsilon^2 + | \varepsilon \alpha |) \nonumber \\
		& =  (2 \ri u_0 k_d) \left( \varepsilon \begin{pmatrix} 0 \\ \psi_1^+ - M_1 C_{D, \Lat}^+ \psi_0^+ \end{pmatrix} + g_s(\mu, \varepsilon)   C_{D, \Lat}^+ \begin{pmatrix} 0 \\ \psi_0^+ \end{pmatrix} \right) + O( \varepsilon | g_s(\mu, \varepsilon) |). \label{eq:psi-}
\end{align}


\subsection{Microscopic scattered field}

Recall \eqref{eq:ansatz}, we have 
\[
	u_\varepsilon(\bx) = - 2 \ri u_0 \sin(\varepsilon k_d x_d) \re^{- \ri \varepsilon \bar{k} \cdot \bar{x}} + u^s_\varepsilon(\bx), 
\]
where $u_\varepsilon^s = \widetilde{\cS_+^{\varepsilon k}} [\psi_\varepsilon]$
is the scattered field. Note that $\Psi_+$ do not contribute to the field outside the bubbles. Using~\eqref{eq:psi-}, we obtain that
\begin{equation} \label{eq:def:us}
	u^s_\varepsilon(\bx) = (2 \ri u_0 k_d) \int_{D} G_+^{\varepsilon \bk}(\bx , \by) \left( \varepsilon \left[ \psi_1^+ - M_1 C_{D, \Lat}^+ \psi_0^+ \right] +
	g_s(\mu, \varepsilon)  C_{D, \Lat}^+ \psi_0^+ \right)(\by) \rd \sigma(\by) + O( \varepsilon | g_s(\mu, \varepsilon) |).
\end{equation}
We now simply the above integral by exploiting decomposition of $G_+^{\varepsilon \bk}$. 
Recall that $L > 0$ is chosen such that, for all $\by \in D$, it holds that $0 \le y_d \le L$. Together with~\eqref{eq:def:Gk+}, we obtain that, for $x_d \ge L$, we have $G_+^{\bk}  = G_{+,p}^{\bk} + G_{+,e}^{\bk}$, where
\begin{eqnarray}
	 G_{+,p}^{\bk} (\bx, \by) &=& \left( \frac{\re^{\ri k_d (x_d - y_d)}}{2 \ri k_d | \WS |} - \frac{\re^{\ri k_d (x_d + y_d)}}{2 \ri k_d | \WS |} \right) \re^{- \ri \bar{k} \cdot \bar{x}}
	=- \frac{\sin(k_d y_d)}{k_d | \WS |} \re^{\ri ( k_d  x_d- \bar{k} \cdot \bar{x})}, \\
	G_{+,e}^{\bk}(\bx, \by) &=& - \sum_{\bl \in \RLat \setminus \{ \bnull \}} \left( \dfrac{\re^{\ri (\bl - \bar{k} ) \cdot (\bar{x} - \bar{y})}}{| \WS | \sqrt{ | \bl - \bar{k} |^2  - k^2}} \sinh \left( \sqrt{ | \bl - \bar{k} |^2  -  k^2} y_d \right) \right) \re^{-  \sqrt{ | \bl - \bar{k} |^2  -  k^2} x_d}. \label{eq:def:G+e}
\end{eqnarray}
It is clear that $G_{+,p}^{\bk}$ consists of the propagative mode,  while $G_{+,e}^{\bk}$ consists of the evanescent modes which are exponentially decaying away from the plane.

Accordingly, we write $\widetilde{\cS_+^{ \bk}} = \widetilde{\cS_{+,p}^{ \bk}} + \widetilde{\cS_{+,e}^{ \bk}}$, with
\begin{eqnarray*}
	 \widetilde{\cS_{+,p}^{ \bk}}[\psi](\bx) &=& \int_{\p D} G_{+,p}^{\bk} (\bx, \by) \psi(\by) \rd \sigma(\by), \\
	  \widetilde{\cS_{+,e}^{ \bk}}[\psi](\bx) &=& \int_{\p D} G_{+,e}^{\bk} (\bx, \by) \psi(\by) \rd \sigma(\by).
\end{eqnarray*}
In particular, in the case when $\bk=0$, we can deduce using ~\eqref{eq:G0expansion} that
\begin{eqnarray} 
\widetilde{S_{+,p}^{\bk = \bnull}}[\psi](\bx)  &= & -\dfrac{1}{| \WS |}\int_{\p D} y_d \psi(\by) \rd \sigma(\by), \\
	 \widetilde{S_{+,e}^{\bk = \bnull}}[\psi](\bx) &= & \widetilde{S_{+}}[\psi](\bx) + \dfrac{1}{| \WS |}\int_{\p D} y_d \psi(\by) \rd \sigma(\by), \label{eq:Se}
\end{eqnarray}
for $\psi \in H^{-1/2}$ and $x_d \ge L$.

Following  (\ref{eq:def:G0+k}) and (\ref{eq:def:Gk+}),   we set
\[
	\widetilde{S_{+,p,0}^{\bk}}[\psi](\bx) := - \re^{\ri (k_d x_d - \bar{k} \cdot \bar{x})} \frac{1}{| \WS | } \int_{\p D} y_d \psi(\by) \rd \sigma(\by).
\]
and
\[
	\widetilde{S_{+,e,0}^{\bk}}[\psi](\bx) := \re^{-\ri \bar{k} \cdot \bar{x}} \left( \widetilde{S_{+}}[\psi](\bx) + \dfrac{1}{| \WS |}\int_{\p D} y_d \psi(\by) \rd \sigma(\by) \right).
\]
The operators~$\widetilde{S_{+,p,0}^{\varepsilon \bk}}$ and $\widetilde{S_{+,e,0}^{\varepsilon \bk}}$ are good approximations for $\widetilde{S_{+,p}^{\varepsilon \bk}}$ and $\widetilde{S_{+,e}^{\varepsilon \bk}}$ respectively. Actually, 
from the expression of $G_{+,p}^{\bk}$ and $G_{+,e}^{\bk}$, we have the following results.
\begin{lemma} \label{lem:GFepsilon}
There exists $C \in \R^+$ such that, for all $\varepsilon \ge 0$, all $x_d \ge L$ and all $\psi \in H^{-1/2}$, it holds that
	\[
		\left| \left( \widetilde{S_{+,p}^{\varepsilon \bk}} - \widetilde{S_{+,p,0}^{\varepsilon \bk}} \right) [\psi] \right| (\bx) + \frac{1}{\varepsilon} \left| \nabla \left(\widetilde{S_{+,p}^{\varepsilon \bk}} - \widetilde{S_{+,p,0}^{\varepsilon \bk}} \right)[\psi] \right| (\bx) \le C \left\| \psi \right\|_{H^{-1/2}} \varepsilon^2,
	\]
	and that
	\[
		\left| \left( \widetilde{S_{+,e}^{\varepsilon \bk}} - \widetilde{S_{+,e,0}^{\varepsilon \bk}} \right) [\psi] \right| (\bx) + \frac{1}{\varepsilon} \left| \nabla \left(\widetilde{S_{+,e}^{\varepsilon \bk}} - \widetilde{S_{+,e,0}^{\varepsilon \bk}} \right)[\psi] \right| (\bx) \le C \left\| \psi \right\|_{H^{-1/2}} \varepsilon.
	\]
\end{lemma}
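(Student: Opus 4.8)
The plan is to estimate the two differences separately by exploiting the explicit series representations of $G_{+,p}^{\bk}$ and $G_{+,e}^{\bk}$ recalled above, together with the fact that the $\varepsilon$-dependence enters only through the rescaled wave vector $\varepsilon \bk$ and the smallness $|y_d| \le L$, $\bar{y} \in \bar{D}$ of the source point. I would first treat the \emph{propagative} part. From the closed form
$G_{+,p}^{\varepsilon\bk}(\bx,\by) = - \frac{\sin(\varepsilon k_d y_d)}{\varepsilon k_d |\WS|} \re^{\ri(\varepsilon k_d x_d - \varepsilon \bar k\cdot\bar x)}$
and the definition of $\widetilde{S_{+,p,0}^{\varepsilon\bk}}$, the difference of kernels is, after factoring $\re^{-\ri\varepsilon\bar k\cdot\bar x}$,
$\bigl( - \tfrac{\sin(\varepsilon k_d y_d)}{\varepsilon k_d|\WS|}\re^{\ri\varepsilon k_d x_d} + \tfrac{y_d}{|\WS|}\re^{\ri\varepsilon k_d x_d} \bigr)$.
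One then Taylor-expands $\sin(\varepsilon k_d y_d) = \varepsilon k_d y_d + O((\varepsilon k_d y_d)^3)$, so the bracket is $O(\varepsilon^2)$ uniformly for $|y_d|\le L$; integrating against $\psi$ and using the continuity of the duality pairing $|\bra f,\psi\ket| \le \|f\|_{H^{1/2}}\|\psi\|_{H^{-1/2}}$ with $f$ a fixed smooth function of $\by$ gives the $\varepsilon^2$ bound on the value. For the gradient, note that each $X$-derivative of the propagative kernel brings down a factor $\ri\varepsilon k_d$ (or $-\ri\varepsilon\bar k$), so $\nabla(\widetilde{S_{+,p}^{\varepsilon\bk}} - \widetilde{S_{+,p,0}^{\varepsilon\bk}})$ is $O(\varepsilon\cdot\varepsilon^2)= O(\varepsilon^3)$ from the mismatched terms, plus an $O(\varepsilon)\cdot O(\varepsilon^2)$ contribution where the derivative hits a term that already agrees; dividing by $\varepsilon$ yields the stated $\varepsilon^2$ bound.

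Next I would handle the \emph{evanescent} part, which is where the main work lies. Here one uses
$G_{+,e}^{\varepsilon\bk}(\bx,\by) = - \sum_{\bl\neq\bnull} \frac{\re^{\ri(\bl-\varepsilon\bar k)\cdot(\bar x-\bar y)}}{|\WS|\sqrt{|\bl-\varepsilon\bar k|^2-\varepsilon^2 k^2}}\, \sinh\!\bigl(\sqrt{|\bl-\varepsilon\bar k|^2-\varepsilon^2 k^2}\,y_d\bigr)\,\re^{-\sqrt{|\bl-\varepsilon\bar k|^2-\varepsilon^2 k^2}\,x_d}$,
and compares it termwise to the $\bk=\bnull$ series obtained from \eqref{eq:G0expansion}, i.e. to the kernel underlying $\widetilde{S_{+,e,0}^{\varepsilon\bk}}$, which (modulo the scalar prefactor $\re^{-\ri\varepsilon\bar k\cdot\bar x}$) has exponents $|\bl|$ in place of $\sqrt{|\bl-\varepsilon\bar k|^2-\varepsilon^2 k^2}$ and $\re^{\ri\bl\cdot(\bar x-\bar y)}$ in place of $\re^{\ri(\bl-\varepsilon\bar k)\cdot(\bar x-\bar y)}$. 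The key estimate is that for each $\bl \in \RLat\setminus\{\bnull\}$ one has $\bigl|\sqrt{|\bl-\varepsilon\bar k|^2-\varepsilon^2 k^2} - |\bl|\bigr| \le C\varepsilon$ and $|\re^{-\ri\varepsilon\bar k\cdot(\bar x-\bar y)}-1| \le C\varepsilon$, with constants that can be taken uniform in $\bl$; combined with the mean value theorem applied to $t\mapsto \sinh(ty_d)\re^{-t x_d}$, each term of the difference series is bounded by $C\varepsilon\,\frac{1}{|\bl|}\,\re^{-c|\bl| x_d}$ (for $x_d\ge L$ and $\varepsilon$ small the exponential rate stays bounded below by some $c>0$). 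Summing the geometric-type series $\sum_{\bl\neq\bnull}\frac{1}{|\bl|}\re^{-c|\bl| L}$, which converges, produces the $O(\varepsilon)$ bound on $\bigl(\widetilde{S_{+,e}^{\varepsilon\bk}}-\widetilde{S_{+,e,0}^{\varepsilon\bk}}\bigr)[\psi]$ after pairing with $\psi$ (again using $\|\cdot\|_{H^{-1/2}}$--$\|\cdot\|_{H^{1/2}}$ duality, the $H^{1/2}(\p D)$-norm of the $\by$-dependent kernel being controlled uniformly by the same convergent series). For the gradient, an $\bx$-derivative either brings down a factor $-\sqrt{|\bl-\varepsilon\bar k|^2-\varepsilon^2 k^2}=O(|\bl|)$ or a factor $\ri(\bl-\varepsilon\bar k)=O(|\bl|)$; this costs one extra power of $|\bl|$, but the exponential $\re^{-c|\bl| L}$ still makes $\sum_{\bl\neq\bnull}\re^{-c|\bl| L}$ converge, so $\nabla\bigl(\widetilde{S_{+,e}^{\varepsilon\bk}}-\widetilde{S_{+,e,0}^{\varepsilon\bk}}\bigr)[\psi] = O(\varepsilon)$, and dividing by $\varepsilon$ gives the $O(1)$ bound claimed — which is consistent with the displayed inequality since the right-hand side is $C\|\psi\|_{H^{-1/2}}\varepsilon \ge$ the $\tfrac1\varepsilon\cdot O(\varepsilon^2)=O(\varepsilon)$ gradient term. (Here one should double-check the bookkeeping: the gradient term divided by $\varepsilon$ is $O(1)$ in the worst case, but the mismatch actually gains one power of $\varepsilon$ because $\widetilde{S_{+,e,0}^{\varepsilon\bk}}$ already captures the $\bk=\bnull$ evanescent kernel exactly, so the difference is $O(\varepsilon)$ in value and $O(\varepsilon)$ in gradient, whence $\tfrac1\varepsilon\|\nabla(\cdots)\| = O(1) \le C\varepsilon$ fails unless one is more careful — so the real content is that even the gradient difference is $O(\varepsilon^2)$, obtained by noting that the $\bk$-dependence of the exponent is $|\bl-\varepsilon\bar k|^2 = |\bl|^2 - 2\varepsilon\bl\cdot\bar k + O(\varepsilon^2)$, i.e. the first-order-in-$\varepsilon$ correction to $\widetilde{S_{+,e}}$ is already built into a first-order Taylor term, and $\widetilde{S_{+,e,0}^{\varepsilon\bk}}$ should be read as including it.)

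The main obstacle I anticipate is precisely this last bookkeeping point for the evanescent gradient: one must identify exactly which $\varepsilon$-order terms are already contained in the ``zeroth-order'' operators $\widetilde{S_{+,p,0}^{\varepsilon\bk}}$ and $\widetilde{S_{+,e,0}^{\varepsilon\bk}}$ (they carry the full phase $\re^{\ri\varepsilon k_d x_d}$ resp. $\re^{-\ri\varepsilon\bar k\cdot\bar x}$, hence are \emph{not} literally the $\varepsilon=0$ operators), and then Taylor-expand the remaining $\varepsilon$-dependence — which sits only in the functions $\sqrt{|\bl-\varepsilon\bar k|^2-\varepsilon^2 k^2}$ and in $\re^{-\ri\varepsilon\bar k\cdot\bar y}$, both smooth in $\varepsilon$ near $0$ with derivatives bounded (after suitable normalization by $|\bl|$) uniformly in $\bl$ — to second order, checking that the first-order term vanishes or is absorbed. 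Once the correct definitions are pinned down, each estimate reduces to: (a) a uniform-in-$\bl$ Taylor bound on the $\bl$-family of smooth scalar functions of $\varepsilon$, and (b) summability of $\sum_{\bl\in\RLat\setminus\{\bnull\}}|\bl|^{m}\re^{-c|\bl| L}$ for $m=0,1$, both of which are routine. Everything else is the continuity of layer potentials in the relevant Sobolev norms, already available from Lemma \ref{lem:SkKk}.
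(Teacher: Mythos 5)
Your handling of the propagative part is sound: since both kernels carry the identical $\bx$-dependence $\re^{\ri \varepsilon (k_d x_d - \bar{k}\cdot\bar{x})}$, the difference is that common phase times an $\bx$-independent coefficient bounded by $C\varepsilon^2\|\psi\|_{H^{-1/2}}$ (Taylor expansion of $\sin(\varepsilon k_d y_d)/(\varepsilon k_d)-y_d$ paired with $\psi$ via $H^{1/2}$--$H^{-1/2}$ duality), and each derivative only brings down a factor of size $\varepsilon|\bk|$, which gives the first display. The sup-norm half of the evanescent estimate is also obtained correctly by your term-by-term comparison (writing $\gamma_\bl(\varepsilon):=\sqrt{|\bl-\varepsilon\bar{k}|^2-\varepsilon^2k^2}$, one has $|\gamma_\bl(\varepsilon)-|\bl||\le C\varepsilon$ uniformly in $\bl$ and $|\re^{\ri\varepsilon\bar{k}\cdot\bar{y}}-1|\le C\varepsilon$, with summability coming from the factors $\re^{-\gamma_\bl (x_d-y_d)}$), modulo the routine care you mention about measuring the $\by$-dependent kernel in $H^{1/2}(\p D)$. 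For what it is worth, the paper itself offers no more detail than this, asserting the lemma directly "from the expression of $G_{+,p}^{\bk}$ and $G_{+,e}^{\bk}$".

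The genuine gap is exactly where you flag uncertainty: the gradient half of the second display, i.e. $|\nabla(\widetilde{S_{+,e}^{\varepsilon \bk}}-\widetilde{S_{+,e,0}^{\varepsilon \bk}})[\psi]|\le C\varepsilon^2\|\psi\|_{H^{-1/2}}$. Your own estimate gives only $O(\varepsilon)$ for this gradient, hence $\tfrac1\varepsilon|\nabla(\cdots)|=O(1)$, which does not prove the stated bound, and the patch in your parenthesis --- that the first-order-in-$\varepsilon$ correction is ``already built into'' $\widetilde{S_{+,e,0}^{\varepsilon \bk}}$ --- contradicts the paper's definition: $\widetilde{S_{+,e,0}^{\varepsilon \bk}}$ is exactly $\re^{-\ri\varepsilon\bar{k}\cdot\bar{x}}$ times the $\bk=\bnull$ evanescent operator, and contains no first-order correction coming from $\gamma_\bl(\varepsilon)$ or from the factor $\re^{\ri\varepsilon\bar{k}\cdot\bar{y}}$. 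At oblique incidence these produce genuinely first-order terms: for a single mode, the $x_d$-derivative of $\frac{\sinh(\gamma_\bl(\varepsilon) y_d)}{\gamma_\bl(\varepsilon)}\re^{-\gamma_\bl(\varepsilon) x_d}-\frac{\sinh(|\bl| y_d)}{|\bl|}\re^{-|\bl| x_d}$ is of order $\varepsilon\,\frac{\bl\cdot\bar{k}}{|\bl|}$ times a nonvanishing profile, so no bookkeeping of Taylor orders with the operators as defined yields $O(\varepsilon^2)$ there. (At normal incidence, $\bar{k}=\bnull$, one does have $\gamma_\bl(\varepsilon)=|\bl|+O(\varepsilon^2)$ and your argument closes; in general the extra power of $\varepsilon$ must come either from a cancellation you have not exhibited or from modifying the approximating operator --- and redefining it, as you suggest, proves a different statement from the lemma.) So as written, your proposal establishes the first display and the $L^\infty$ half of the second, but not the gradient half of the second.
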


Now, we determine the functions $\alpha_1$ and $\alpha_2$ defined in~\eqref{eq:def:alpha0}-\eqref{eq:def:alpha1}.
\begin{lemma} \label{lem:alpha01}
	The function~$- C_{D, \Lat}^+ \widetilde{S_{+}} [\psi_0^+]$ is the solution to~\eqref{eq:def:alpha0}, while the function~$\widetilde{S_{+}} [\psi_1^+ - M_1 C_{D, \Lat}^+ \psi_0^+]$ is the solution to~\eqref{eq:def:alpha1}, \textit{i.e.}
	\[
		- C_{D, \Lat}^+   \widetilde{\cS_+}[\psi_0^+] = \alpha_0  \quad \text{and} \quad
		\widetilde{S_{+}} [\psi_1^+ - M_1 C_{D, \Lat}^+ \psi_0^+] = \alpha_1.
	\]
	In particular, from~\eqref{eq:Se} and the fact that $G_{+,e}^{\bk = \bnull}$ is exponentially decaying, it holds that
	\begin{equation} \label{eq:alpha0infty}
		 \alpha_{0, \infty} = - \frac{1}{| \WS |} \int_{\p D} y_d \left( - C_{D, \Lat}^+ \psi_0^+ \right) = \frac{M_1 C_{D, \Lat}^+}{| \WS |}
	\end{equation}
	and
	\begin{equation} \label{eq:alpha1infty}
		 \alpha_{1, \infty}  = - \frac{1}{| \WS |} \int_{\p D} y_d \left( \psi_1^+ - M_1 C_{D, \Lat}^+ \psi_0^+ \right) = \frac{- \bra y_d, \psi_1^+ \ket}{| \WS |}+ \frac{M_1^2 C_{D, \Lat}^+}{| \WS |}.
	\end{equation}
\end{lemma}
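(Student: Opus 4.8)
The statement is a verification: each of the two candidate functions is a periodic--Dirichlet single-layer potential $\widetilde{\cS_+}[\psi]$ (see \eqref{eq:def:tildeSk}), and the task is to check that it solves the corresponding boundary value problem \eqref{eq:def:alpha0} or \eqref{eq:def:alpha1}, and then to read off the constants $\alpha_{0,\infty},\alpha_{1,\infty}$ from its limit as $x_d\to\infty$. Since uniqueness for \eqref{eq:def:alpha0}--\eqref{eq:def:alpha1} is the standard one (a function harmonic in $\R^d_+\setminus\overline{\Omega^1}$, $\Lat$-periodic in $\bar x$, vanishing on $\p\R^d_+$ and on $\p\Omega^1$, and converging to a constant as $x_d\to\infty$ must equal that constant), it suffices to exhibit one solution in each case.

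First I would record the three structural properties of $v:=\widetilde{\cS_+}[\psi]$ valid for any $\psi\in H^{-1/2}$. For fixed $\by\in\p D$, the kernel $G_+^{\bnull}(\cdot,\by)$ is harmonic away from the lattice translates of $\by$ and of its reflection across $\p\R^d_+$; since $D\subset\R^d_+$, none of these points lies in $\R^d_+\setminus\overline{\Omega^1}$, so differentiating under the integral sign gives $\Delta v=0$ on $\R^d_+\setminus\overline{\Omega^1}$. Because $G_+^{\bnull}$ vanishes on $\p\R^d_+$ by construction (see \eqref{eq:def:Gk+}), so does $v$. Finally, the jump relations of Lemma~\ref{lem:SkKk}\,(iii) give $v|_+=\cS_+[\psi]$ on $\p D$. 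Hence $v$ solves the relevant problem as soon as its \emph{exterior} trace matches the prescribed Dirichlet datum and $v$ has a limit at infinity.

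Next I would compute the traces using the two identities established above. With $\psi=-C_{D,\Lat}^+\psi_0^+$, the identity $\cS_+[\psi_0^+]=-(1/C_{D,\Lat}^+)\1_{\p D}$ gives $v|_+=\1_{\p D}$, so $-C_{D,\Lat}^+\widetilde{\cS_+}[\psi_0^+]$ solves \eqref{eq:def:alpha0}. With $\psi=\psi_1^+-M_1C_{D,\Lat}^+\psi_0^+$, the identity $\cS_+^{-1}[x_d|_{\p D}]=\psi_1^+-C_{D,\Lat}^+M_1\psi_0^+$ gives $v|_+=x_d|_{\p D}$, so $\widetilde{\cS_+}[\psi_1^+-M_1C_{D,\Lat}^+\psi_0^+]$ solves \eqref{eq:def:alpha1}.

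For the behaviour as $x_d\to\infty$ I would use, for $x_d\ge L$, the splitting $\widetilde{\cS_+}=\widetilde{\cS_{+,p}^{\bnull}}+\widetilde{\cS_{+,e}^{\bnull}}$: by \eqref{eq:G0expansion} the linear-in-$x_d$ parts of the two copies of $G_\sharp^{\bnull}$ entering \eqref{eq:def:Gk+} cancel, so $\widetilde{\cS_{+,p}^{\bnull}}[\psi]$ equals the constant $-\tfrac{1}{|\WS|}\bra\psi,y_d|_{\p D}\ket$ while $\widetilde{\cS_{+,e}^{\bnull}}[\psi]$ is exponentially decaying --- this is precisely \eqref{eq:Se}. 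Thus $\alpha_\infty=-\tfrac{1}{|\WS|}\bra\psi,y_d|_{\p D}\ket$; substituting $\psi=-C_{D,\Lat}^+\psi_0^+$ and using $M_1=\bra\psi_0^+,x_d|_{\p D}\ket$ from \eqref{eq:def:M1} gives $\alpha_{0,\infty}=\tfrac{M_1C_{D,\Lat}^+}{|\WS|}$, i.e.\ \eqref{eq:alpha0infty}, while substituting $\psi=\psi_1^+-M_1C_{D,\Lat}^+\psi_0^+$ gives $\alpha_{1,\infty}=-\tfrac{\bra y_d,\psi_1^+\ket}{|\WS|}+\tfrac{M_1^2C_{D,\Lat}^+}{|\WS|}$, i.e.\ \eqref{eq:alpha1infty}. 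There is no genuine obstacle here; the proof is bookkeeping. The only two points that require care are that the jump formula must be read off on the exterior ($+$) side, and that the two reflected copies of $G_\sharp^{\bnull}$ in $G_+^{\bnull}$ have cancelling linear growth --- which is exactly why $\widetilde{\cS_+}[\psi]$ converges to a constant, rather than growing, as $x_d\to\infty$; both are immediate from the formulas already displayed.
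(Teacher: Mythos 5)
Your proposal is correct and follows essentially the same route the paper takes (implicitly): harmonicity, the vanishing of $G_+^{\bnull}$ on $\p\R^d_+$, the trace identities $\cS_+[\psi_0^+]=-(1/C_{D,\Lat}^+)\1_{\p D}$ and $\cS_+^{-1}[x_d|_{\p D}]=\psi_1^+-C_{D,\Lat}^+M_1\psi_0^+$ from the preceding lemma, and the propagative/evanescent splitting \eqref{eq:Se} to read off the constants at infinity. The verification of the boundary data and the evaluation of $\alpha_{0,\infty},\alpha_{1,\infty}$ match the paper's intended argument, so nothing is missing.
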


Finally, we are ready to prove our main result Theorem~\ref{th:resonant_metascreen}.

\noindent \textbf{$\bullet$ Proof of Theorem~\ref{th:resonant_metascreen}}

We introduce 
 \begin{align*}
 	u_1^\varepsilon(\bx) & :=  (2 \ri u_0 k_d) \widetilde{S_{+,p,0}^{\varepsilon \bk}} \left[ \varepsilon \left( \psi_1^+ - M_1 C_{D, \Lat}^+ \psi_0^+ \right) + g_s(\mu, \varepsilon) \left(C_{D, \Lat}^+ \psi_0^+ \right) \right](\bx) \\
	& = (2 \ri u_0 k_d)  \re^{\ri \varepsilon (k_d x_d - \bar{k} \cdot \bar{x})} \left(\varepsilon \alpha_{1, \infty} - g_s(\mu, \varepsilon) \alpha_{0, \infty} \right)
 \end{align*}
and
\begin{align*}
	u_{\rm BL}^\varepsilon(\bx) & :=  (2 \ri u_0 k_d) \widetilde{S_{+,e,0}^{\varepsilon \bk}} \left[ \varepsilon \left( \psi_1^+ - M_1 C_{D, \Lat}^+ \psi_0^+ \right) +g_s(\mu, \varepsilon) \left(C_{D, \Lat}^+ \psi_0^+ \right) \right](\bx) \\
		& = (2 \ri u_0 k_d)  \re^{- \ri \varepsilon \bar{k} \cdot \bar{x}} \left[  \varepsilon \left( \alpha_1(\bx) - \alpha_{1, \infty} \right) - g_s(\mu, \varepsilon)  \left( \alpha_0 (\bx) - \alpha_{0, \infty} \right)\right].
\end{align*}
By~\eqref{eq:def:us} and Lemma~\ref{lem:GFepsilon}, we deduce that there exists $C \ge 0$ such that,
\[
	\left\| u_\varepsilon^s - \left( u_1^\varepsilon + u_{\rm BL}^\varepsilon \right) \right\|_{L^\infty(S_L)} + 
	\frac{1}{\varepsilon}  \left\| \nabla \left[ u_\varepsilon^s - \left( u_1^\varepsilon +  u_{\rm BL}^\varepsilon \right) \right] \right\|_{L^\infty(S_L)} \le C \left( \varepsilon^2 + \varepsilon | g_s(\mu, \varepsilon) | \right).
\]
Going back to the macroscopic variable $\bX = \varepsilon \bx$ we can conclude the proof of Theorem~\ref{th:resonant_metascreen}.

\section{Numerical illustrations}
\label{sec:numericalIllustrations}
In this section, all the numerical results are obtained for the two-dimensional case. The bubbles are set along a lattice $a \mathbb{Z}$.
We follow the approach taken in~\cite{Ammari2016_Minnaert}, in which the resonant frequencies, for both the one bubble and two bubble cases, were determined numerically. We calculate the characteristic value $\omega_c^+$ of the block operator matrix $\cA$~\eqref{eq:def:Aepsilon} directly. We then use this result to confirm the formula~\eqref{eq:Repsilon} for the periodic bubble resonance:
\[
\omega_M^+ := \sqrt{\frac{\delta C_{D, \Lat}^+}{| D |}} v_b.
\]

\subsection{Implementation details}
Determination of $\omega_c^+$ requires the calculation of the periodic Green function for the Helmholtz equation. It is well known that this function, the solution to~\eqref{eq:Gksharp}, suffers from extremely slow convergence. It can be written in the form
\[
G_\sharp^\bk(\bx, \by) = -\frac{i}{4}\sum_{n \in \Z} e^{i\bar{k}na} H_0^{(1)}(kP_n),
\]
where $H_0^{(1)}$ is the Hankel function of the first kind of order zero, $P_n := \sqrt{(\bar{x} - \bar{y} - na)^2 + (x_d - y_d)^2}$, and $\bar{k} = k \cos(\theta)$ is the component of the wave incident at angle $\theta$ along the $\bar{x}$ direction. The terms of the summation are of the order $O(n^{-\frac{1}{2}})$ for large $n$, which makes the function computationally challenging.

In order to accelerate the convergence, we implement Ewald's method~\cite{ewald, Capolino2005}, tailored to two-dimensional problems featuring one-dimensional periodicity. The periodic Green function is split into two components: a spatial component $G_{\sharp, \rm spat}$ and a spectral component $G_{\sharp, \rm spec}$:
\[
G_\sharp^\bk = G_{\sharp, \rm spat}^\bk + G_{\sharp, \rm spec}^\bk,
\]
where
\begin{align}\label{eq:def:Gspat}
G_{\sharp, \rm spat}^\bk(\bx,\by) &= \frac{1}{4\pi} \sum_{n \in \Z} e^{-i\bar{k}na} \sum_{q=0}^\infty \bigg(\frac{k}{2\cE}\bigg)^{2q}\frac{1}{q!}E_{q+1}(P_n^2\cE^2),
\end{align}
and
\begin{align}\label{eq:def:Gspec}
G_{\sharp, \rm spec}^\bk(\bx,\by) &= \frac{1}{4a} \sum_{p \in \Z} \frac{e^{-ik_{1p}(\bar{x}-\bar{y})}}{ik_{2p}} \times \bigg[e^{ik_{2p}|x_d-y_d|} \text{erfc}\bigg(\frac{ik_{2p}}{2\cE} + |x_d-y_d|\cE \bigg) \\
& + e^{-ik_{2p}|x_d-y_d|} \text{erfc}\bigg(\frac{ik_{2p}}{2\cE} - |x_d-y_d|\cE \bigg) \bigg].
\end{align}
Here, we set
\[
k_{1p} = \bar{k} + \frac{2\pi p}{a}, \quad \mbox{
and } \quad 
k_{2p}= \sqrt{k^2 - k_{1p}^2}.
\]
$k_{1p}$ and $k_{2p}$ are the Floquet wavenumbers along the $\bar{x}$ and $x_d$ directions, respectively. $E_i$ is the exponential integral. $\cE$ is the splitting parameter, which we choose to be $\sqrt{\pi}/d$, an appropriate choice in a low frequency setting, and one which results in exponential convergence in both the $G_{\sharp, \rm spat}$ and $G_{\sharp, \rm spec}$ terms. Due to the exponential convergence of the expressions
 in~\eqref{eq:def:Gspat} and~\eqref{eq:def:Gspec}, we approximate the infinite sums with $n \in [-5, 5]$, $q \in [-15, 15]$, and $p \in [-5, 5]$. We apply corrections to account for periodicity to the usual singular diagonal terms of the discretized operators $\cS_{+}^{\bk}$ and  $\cK_{+}^{\bk, *}$ that comprise the block operator matrix $\cA$.



\subsection{Regime description}
In order to perform the calculations in the appropriate regime, which requires a significant contrast in both the bulk modulii, and the density, of the liquid and the bubbles, we take $\kappa_b$ and $\rho_b$ to be of order $1$, and $\kappa$ and $\rho$ to be of order $10^3$. The wave speeds inside and outside the bubbles are both of order $1$.

\subsection{Validation of the periodic resonant frequency formula}
We begin by calculating the resonant frequency given by the formula. In order to obtain the capacity $C_{D, \Lat}^+$, we first compute the eigenfunction $\psi$ corresponding to the eigenvalue $\frac{1}{2}$ for the operator $\cK_{+}^{*} = \cK_{+}^{0,*}$. We approximate a basis for the discretized version of this operator (and also for the single layer potential) with the family of functions having a value of $1$ at a particular point, and $0$ everywhere else. We fix the period to be $a = 10$, and take a set of bubble radii in the range $r \in [0.1, 1]$. The resonant frequencies obtained with the formula are given in Table~\ref{tab:periodic-resonance}. The characteristic values of $\cA$ are shown in Figure~\ref{im:periodic-resonance-dirichlet}. It is clear that the characteristic values $\omega_c^+$ correspond to the resonant frequencies $\omega_M^+$ obtained with the formula,  confirming its validity.

\setlength{\tabcolsep}{12pt}
\begin{table} [!t]
    \centering
    \begin{tabular}{cc}
        $r$ & $\omega_M^+$ \\        
       \medskip
        0.1000 & 0.3898 \\
        0.3250 & 0.1191 \\
        0.5500 & 0.0694 \\
        0.7750 & 0.0483 \\
        1.0000 & 0.0366 \\
    \end{tabular}
    \caption{The resonant frequencies $\omega_M^+$ obtained with the formula for a set of radii in the interval $[0.1, 1]$. The spectrum of the corresponding block operator matrix $\cA$ from Equation~\eqref{eq:def:Aepsilon}, which was calculated explicitly, is shown in Figure~\ref{im:periodic-resonance-dirichlet}.}\label{tab:periodic-resonance}
\end{table}

\begin{figure} [!t]
\begin{center}
  \includegraphics[width=1\textwidth]{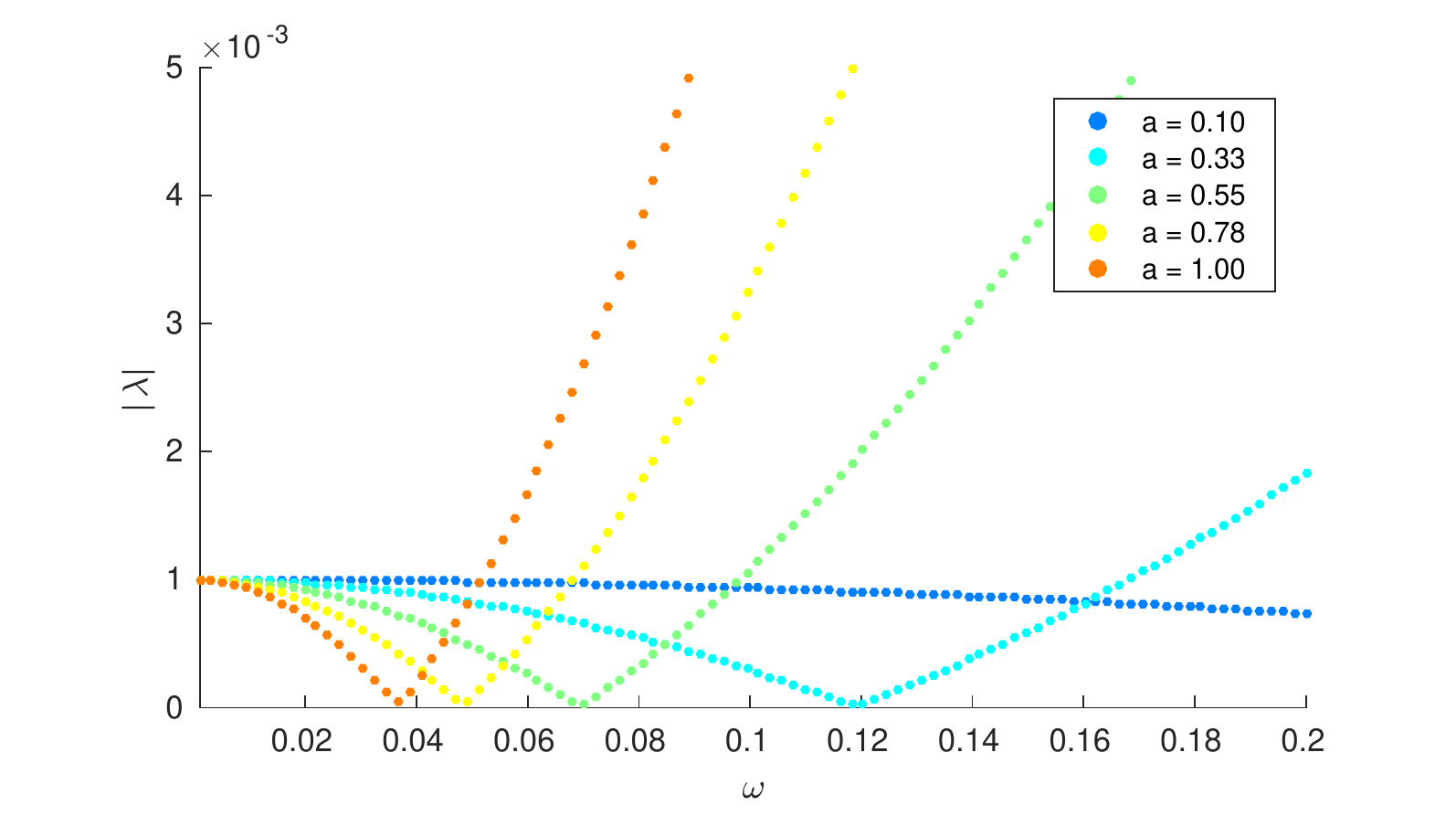}
  \caption{The smallest eigenvalue in the spectrum of the block operator matrix $\cA$ from Equation~\eqref{eq:def:Aepsilon}, for a period of $a = 10$, and a set of bubble radii in the interval $[0.1, 1]$. We have characteristic values $\omega_c^+$ where the eigenvalues go to $0$, indicating resonance. The characteristic values correspond to the resonant frequencies $\omega_M^+$, given in Table~\ref{tab:periodic-resonance}, obtained with the formula for the same parameters.}
  \label{im:periodic-resonance-dirichlet}
\end{center}
\end{figure}

\subsection{Effect of periodicity and bubble radii on resonance}
Let $\beta$ be the distance from the bubble centers to the reflective plane $\partial D$. In Figure~\ref{im:periodic-resonance-over-periods}, we fix the bubble radii at $r = 1$, and analyze the relationship between periodicity and the resonant frequency, as we increase $\beta$, moving the bubbles further from the reflective plane. We find that $\omega_M^+$ has a logarithmic dependency on the period, and is inversely proportional to the distance from the plane.

\medskip

Similarly in Figure~\ref{im:periodic-resonance-over-radii}, where this time we fix the period at $a = 5$, and consider the relationship between the bubble radii and the resonant frequency as the distance from the plane varies. Although resonant frequencies of bubbles are known to be inversely proportional to their radii, here we find that when the bubble radii are increased such that the bubbles are almost touching the reflective plane, the resonant frequency $\omega_M^+$ in fact increases as we further increase the radii.

\begin{figure} [!t]
\begin{center}
  \includegraphics[width=1\textwidth]{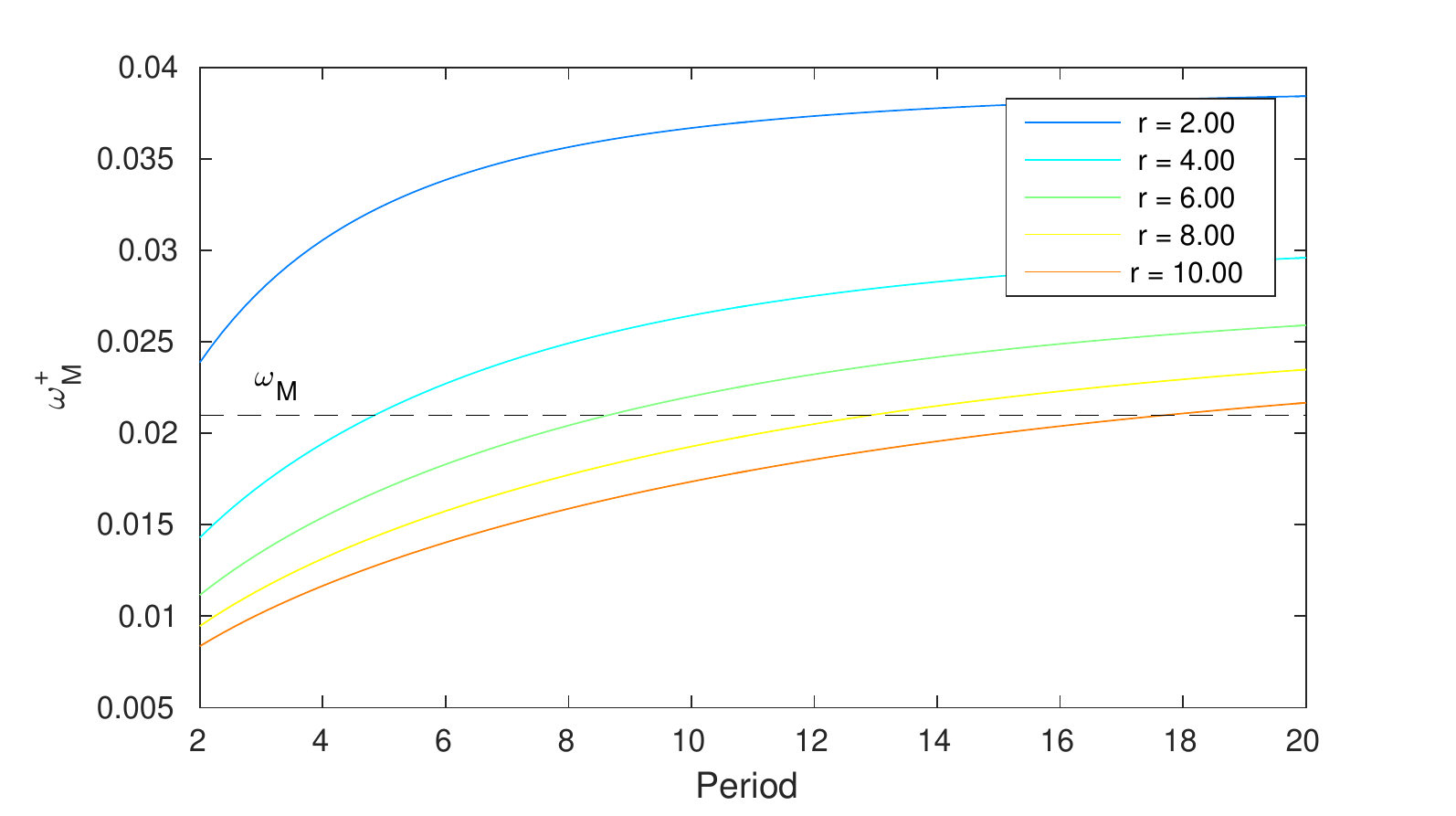}
  \caption{The bubble radii are fixed at $r = 1$, and the distance from the bubble centers to the reflective plane $\partial D$ is represented by $\beta$. $\omega_M^+$ has a logarithmic dependency on the period $a$, and decreases as the distance $r$ from the reflective plane $\partial D$ is increased. The one bubble resonant frequency $\omega_M$, for a bubble of radius $1$ in free space, is plotted for reference.}
  \label{im:periodic-resonance-over-periods}
\end{center}
\end{figure}

\begin{figure} [!t]
\begin{center}
  \includegraphics[width=1\textwidth]{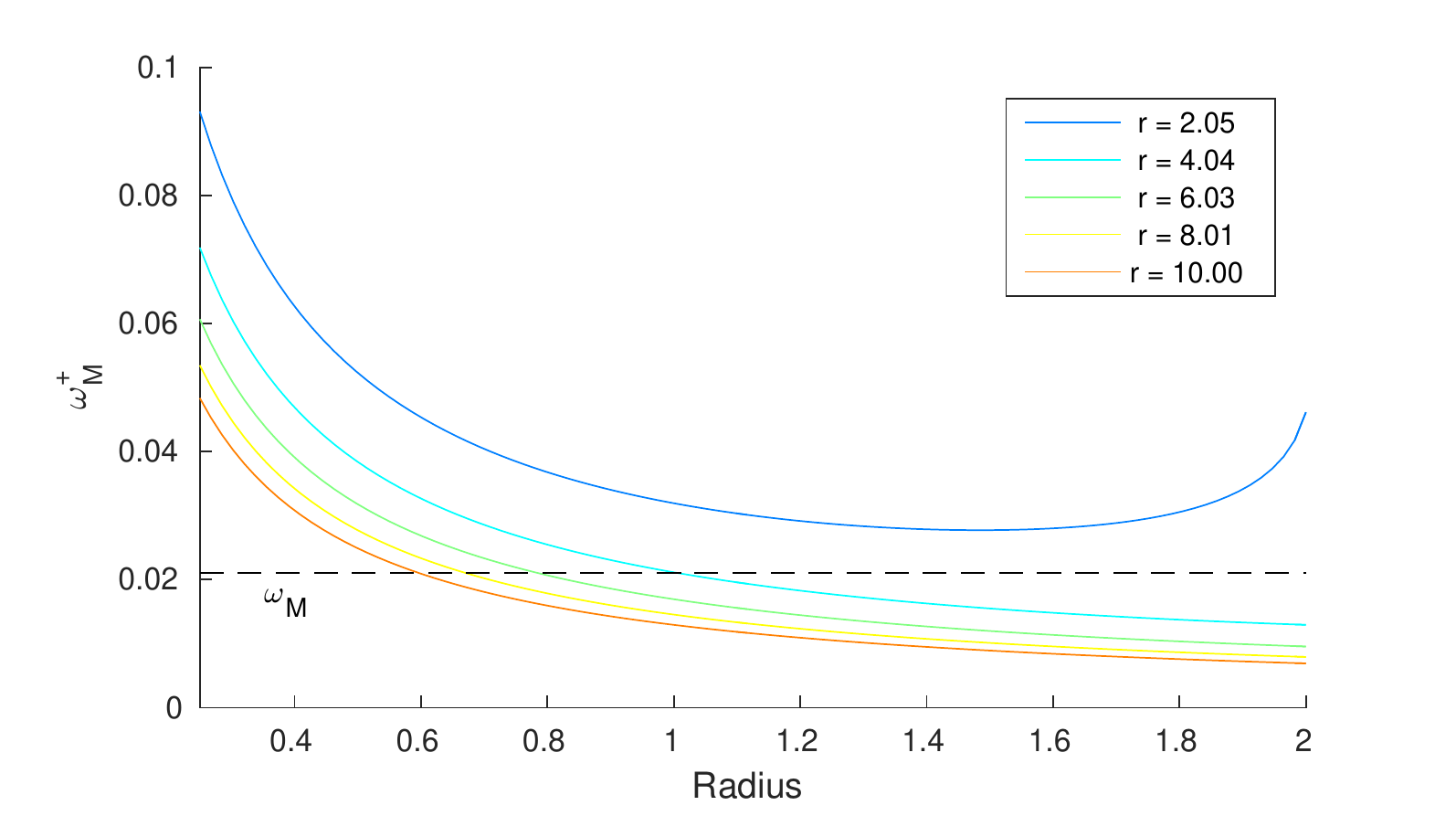}
  \caption{The period is fixed at $a = 5$, and the distance from the bubble centers to the reflective plane $\partial D$ is represented by $\beta$. The resonant frequency of a bubble is inversely proportional to its radius. However, in the case of a periodic lattice of bubble near a reflective plane, when the bubble radii $r$ become large enough such that the bubbles are almost touching the plane, the resonant frequency $\omega_M^+$ in fact increases as we further increase the radii. Again, the one bubble resonant frequency $\omega_M$, for a bubble of radius $1$ in free space, is plotted for reference.}
  \label{im:periodic-resonance-over-radii}
\end{center}
\end{figure}



\bibliographystyle{plain}


\end{document}